\theoremstyle{break}
\newtheorem{Th}{Theorem}
\newtheorem{Rk}{Remark}
\newtheorem{Ex}{Example}
\newtheorem{Le}{Lemma}
\newtheorem{Cor}{Corollary}
\newcommand{\PP}{\mathbb{P}}
\newcommand{\NN}{\mathbb{N}}
\newcommand{\ind}[1]{\mathbf{1}_{\{#1\}}\,}
\newcommand\blfootnote[1]{%
  \begingroup
  \renewcommand\thefootnote{}\footnote{#1}%
  \addtocounter{footnote}{-1}%
  \endgroup
}
\newenvironment{prooft}[1]{\vskip 2mm\noindent {\bf Proof of #1.}}
                    {\hfill $\square$ \vskip 2mm \noindent}
\date{}  
\begin{document}

\begin{center}
\huge{Convergence of weighted ergodic averages}
\end{center}

\begin{center}
\large{Ahmad Darwiche \& Dominique Schneider}\footnote{Univ. Littoral C\^ote d'Opale, UR 2597, LMPA, Laboratoire de Math\'ematiques Pures et Appliqu\'ees Joseph Liouville, F-62100 Calais, France. Mails: ahmad.darwiche@univ-littoral.fr \& dominique.schneider@univ-littoral.fr}
\end{center}

\begin{abstract}
    Let $(X, \mathcal{A},\mu)$ be a probability space and let $T$ be a contraction on $L^2(\mu)$.  We provide suitable conditions over sequences $(w_k)$, $(u_k)$ and $(A_k)$ in such a way that the weighted ergodic limit  $\lim\limits_{N\rightarrow\infty}\frac{1}{A_N}\sum_{k=0}^{N-1} w_k T^{u_k}(f)=0$ $\mu$-a.e. for any function $f$ in $L^2(\mu)$. As a consequence of our main theorems, we also deal with the so-called one-sided weighted ergodic Hilbert transforms.
\begin{center}
\textbf{R\'esum\'e}
\end{center}
    Soient $(X, \mathcal{A},\mu)$ un espace de probabilit\'e et  $T$ une contraction agissant sur $L^2(\mu)$. Nous donnons des conditions sur les suites $(w_k)$, $(u_k)$ et $(A_k)$ de sorte que la limite des moyennes ergodiques pond\'er\'ees $\lim\limits_{N\rightarrow\infty}\frac{1}{A_N}\sum_{k=0}^{N-1} w_k T^{u_k}(f)=0$ $\mu$-p.p. pour toute $f$ dans $L^2(\mu)$. Comme
cons\'equence de nos principaux r\'esultats, nous \'etudions \'egalement des propri\'et\'es de convergence ponctuelle de la transform\'ee de Hilbert unilat\'erale pond\'er\'ee.
\end{abstract}
\blfootnote{2010 Mathematics Subject Classification: 37A30, 37A50.}
\blfootnote{Keywords: Weighted ergodic averages, Contractions of Hilbert space, One-sided weighted ergodic Hilbert transformation, Almost everywhere convergence, Moment inequalities.}

\section{Introduction}

 Let $(X, \mathcal{A},\mu)$ be a probability space and let $T:X\rightarrow X$ be an invertible transformation preserving $\mu$, i.e. $\mu(T^{-1}A)=\mu(A)$ for any $A\in \mathcal{A}$. In what follows, we write  $(T^kf)(x)=f(T^kx)$ for any function $f:X\rightarrow \mathbb{R}$, $x\in X$ and $k\geq 1$. The study of averages over iterations of $T$ started in 1930s with the classical Birkhoff's Ergodic Theorem \cite{Birkhoff656}. Wiener and Wintner  \cite{Wiener1941HarmonicAA} generalized this result by showing that, for any $f\in L^1(\mu)$, $\mu-$almost everywhere ($\mu$-a.e.), for any $\theta \in \mathbb{R}$,
 \[\lim\limits_{N\rightarrow\infty}\frac{1}{N}\sum_{k=0}^{N-1} e^{2i \pi k\theta} T^{k}(f)=0.\] Lesigne \cite{lesigne_1990} went on to prove the same property when the term $e^{2i\pi k\theta}$ is replaced by any weight of the form $w_k= e^{2i \pi P(k)}$, where $P$ is a polynomial. In this way, many results on the convergence of averages of the form
 \begin{align}{\label{average1}}
   \frac{1}{N}\sum_{k=0}^{N-1} w_k T^{u_k}(f)
\end{align}
were established for various weights $(w_k)$ and for various sequences of integers  $(u_k)$.

In this paper, we consider a more general problem in the following sense. Let $(w_k)$, $(u_k)$ and $(A_k)$ be sequences of complex numbers, integers and real numbers, respectively. We provide suitable conditions over these sequences to ensure that the weighted ergodic average \begin{align*}
  \frac{1}{A_N}\sum_{k=0}^{N-1} w_k T^{u_k}(f),  
\end{align*}
converges to $0$ $\mu$-a.e., where $T$ is a contraction on $L^2(\mu)$ and where $f\in L^2(\mu)$. We also investigate the everywhere convergence of the so-called one-sided weighted ergodic Hilbert transforms, namely
\[\sum_{k\geq 1} \frac{w_k}{A_k}T^{u_k}(f).\]

Before stating our theorems, we first give some notation and conditions.  For any $M<N$, we let \[V_{M,N}(\theta)=\sum_{k=M}^{N-1}w_ke^{2i\pi \theta u_k}.\] When $M=0$, we take the convention $V_{0,N}(\theta)=V_N(\theta)$. In what follows, we denote by $C$ a generic positive constant which may differ from line to line. We will mainly consider the two following conditions, referred to as Condition $(H1)$ and Condition $(H2)$.

 \textit{Condition $(H1)$:} we say that the sequences $(w_k)$ and $(u_k)$ satisfy condition $(H1)$ if there exist $\delta\geq 0$, $\frac{1}{2}\leq \alpha<1$ and $\beta\in \mathbb{R}$, with $\delta+\alpha<1$, such that for any $M<N$,
\begin{align}{\label{CTh1}}
    \sup_{\theta \in \mathbb{R}}|V_{M,N}(\theta)|\leq C N^\delta(N-M)^{\alpha}\log^\beta N.
\end{align}

 \textit{Condition $(H2)$:}  we say that the sequences $(w_k)$ and $(u_k)$ satisfy condition $(H2)$ if there exist $\frac{1}{2}\leq \alpha\leq 1$ and $\beta \in \mathbb{R}$ such that, for any $M<N$,
\begin{align}{\label{CTh2}}
    \sup_{\theta \in \mathbb{R}}|V_{N}(\theta)|\leq C N^{\alpha}\log^\beta N.
\end{align}

Notice that, if $(H1)$ holds with $\delta=0$, then $(H2)$ also holds. Moreover, when $(w_k)$ is a $q-$multiplicative sequence (see Section 4 in \cite{Fan1} for a definition), for some integer $q\geq 2$, it was proved that \eqref{CTh1} holds with $\beta=\delta=0$ if and only if \eqref{CTh2} holds with $\beta=0$ (see Theorem 4 in \cite{Fan1}). This type of conditions has already been considered in the classical literature, see e.g. \cite{Schneider,Fan1}.

In \cite{Schneider}, Durand and Schneider proved that, if  \eqref{CTh2} holds with $\beta=0$, then for any dynamical system  $(X, \mathcal{A},\mu,T)$, for any $\epsilon >0$, $f \in L^{2+\epsilon}(\mu)$, $\beta' >\frac{\alpha+2}{3}$,
\begin{align}{\label{limit1}}
 \lim\limits_{N\rightarrow\infty}\frac{1}{N^{\beta'}}\sum_{k=0}^{N-1} w_k T^{u_k}(f)=0 \quad \mu-a.e..
\end{align}


\bigskip
More recently, Fan \cite{Fan1} investigated weighted ergodic averages under Condition $(H1)$, with $\delta=\beta=0$. He obtained that for any dynamical system $(X, \mathcal{A},\mu,T)$, and for any $f \in L^2(\mu)$,  
\begin{align*}\lim\limits_{N\rightarrow\infty}\frac{1}{N^\alpha \log^2N \phi(\log N)}\sum_{k=0}^{N-1} w_k T^{u_k}(f)=0\quad \mu-a.e..
\end{align*}
In the above equation, the function $\phi: \mathbb{R}_+\to \mathbb{R}_+$ is assumed to be increasing and to satisfy the following properties:  $\phi(x)\leq C \phi(2x)$  for any $x\in \mathbb{R}_+$, with $C>1$, and $\sum_{n=0}^\infty\frac{1}{n\phi(n)}<\infty$.

 Our first theorem improves Fan's result as follows.
 

\begin{Th}{\label{Th1}}
Assume that Condition $(H1)$ holds. Let $H$ be such that $H>\frac{1}{2}+\beta$ if $\alpha \neq \frac{1}{2}$, or such that $H>\frac{3}{2}+\beta$ if $\alpha= \frac{1}{2}$, where $\alpha,\beta$ are as in Condition $(H1)$. Then for any contraction $T$ on $L^2(\mu)$, and for any $f \in L^2(\mu)$, we have
\begin{align*}
\lim\limits_{N\rightarrow\infty}\frac{1}{N^{\alpha+\delta}\log^HN}\sum_{k=0}^{N-1} w_k T^{u_k}(f)=0 \quad \mu-a.e.
\end{align*}
and
\begin{align*}
    \left|\left|\sup_{N>1}\left|\frac{1}{N^{\alpha+\delta}\log^HN}\sum_{k=0}^{N-1} w_k T^{u_k}(f)\right| \, \right|\right|_{2,\mu}\leq C\left|\left|f\right|\right|_{2,\mu}.
\end{align*}
\end{Th}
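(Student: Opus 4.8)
The plan is to reduce everything to a single $L^2$ spectral estimate and then run a Rademacher--Menshov dyadic chaining argument. First I would pass from the contraction $T$ to a unitary operator: by the Sz.-Nagy dilation theorem there exist a Hilbert space $\mathcal{K}\supseteq L^2(\mu)$, a unitary $U$ on $\mathcal{K}$, and the orthogonal projection $P$ onto $L^2(\mu)$ with $T^{n}=P\,U^{n}|_{L^2(\mu)}$ for $n\geq 0$. Writing $S_{M,N}f=\sum_{k=M}^{N-1}w_kT^{u_k}f$ and $S_Nf=S_{0,N}f$, and using the spectral resolution $U=\int_{\mathbb{T}}e^{2i\pi\theta}\,dE(\theta)$, the contractivity of $P$ gives
\[
\|S_{M,N}f\|_{2,\mu}^2\le\Big\|\int_{\mathbb{T}}V_{M,N}(\theta)\,dE(\theta)f\Big\|_{\mathcal{K}}^2=\int_{\mathbb{T}}|V_{M,N}(\theta)|^2\,d\sigma_f(\theta),
\]
where $\sigma_f(\cdot)=\langle E(\cdot)f,f\rangle$ is a positive measure on $\mathbb{T}$ of total mass $\|f\|_{2,\mu}^2$. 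Bounding the integrand by its supremum and invoking \eqref{CTh1} then yields, for every block,
\[
\|S_{M,N}f\|_{2,\mu}\le\sup_\theta|V_{M,N}(\theta)|\cdot\|f\|_{2,\mu}\le C\,N^\delta(N-M)^\alpha\log^\beta N\,\|f\|_{2,\mu}.
\]
This single inequality is the engine of the whole proof.

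Next I would work along the dyadic subsequence $N_j=2^j$ and set $a_N=N^{\alpha+\delta}\log^H N$. Taking $M=0$, $N=N_j$ above gives $\|S_{N_j}f\|_{2,\mu}\le C\,2^{j(\alpha+\delta)}j^\beta\|f\|_{2,\mu}$, so $\|S_{N_j}f/a_{N_j}\|_{2,\mu}\le C\,j^{\beta-H}\|f\|_{2,\mu}$. Since $H>\beta+\tfrac12$, the series $\sum_j j^{2(\beta-H)}$ converges, hence $\sum_j\|S_{N_j}f/a_{N_j}\|_{2,\mu}^2<\infty$; this forces $\sum_j|S_{N_j}f/a_{N_j}|^2<\infty$ $\mu$-a.e., and in particular $S_{N_j}f/a_{N_j}\to 0$ $\mu$-a.e.

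The core of the argument is to control the oscillation inside a dyadic block $[N_j,N_{j+1})$. Here I would use the Rademacher--Menshov device: every partial sum $S_Nf-S_{N_j}f$ with $N_j\le N<N_{j+1}$ decomposes into at most $j$ dyadic sub-blocks, so
\[
\max_{N_j\le N<N_{j+1}}|S_Nf-S_{N_j}f|\le\sum_{l=0}^{j}\ \max_{I}\ \Big|\sum_{k\in I}w_kT^{u_k}f\Big|,
\]
the inner maximum running over the level-$l$ dyadic sub-blocks $I\subseteq[N_j,N_{j+1})$. For a fixed level $l$ there are about $2^{j-l}$ such blocks, each of length $2^l$, and the engine inequality gives $\|\sum_{k\in I}w_kT^{u_k}f\|_{2,\mu}\le C\,2^{j\delta}2^{l\alpha}j^\beta\|f\|_{2,\mu}$. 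Replacing the maximum over blocks by the square root of the sum of squares and summing over $l$, one is led to evaluate $\sum_{l=0}^{j}2^{l(\alpha-1/2)}$. When $\alpha>\tfrac12$ this geometric sum is $O(2^{j(\alpha-1/2)})$ and produces $\|\max_{N_j\le N<N_{j+1}}|S_Nf-S_{N_j}f|\|_{2,\mu}\le C\,2^{j(\alpha+\delta)}j^\beta\|f\|_{2,\mu}$; when $\alpha=\tfrac12$ the sum is $O(j)$, costing one extra logarithmic factor and giving $C\,2^{j(\alpha+\delta)}j^{\beta+1}\|f\|_{2,\mu}$. Dividing by $a_{N_j}$ yields $C\,j^{\beta-H}\|f\|_{2,\mu}$ in the first case and $C\,j^{\beta+1-H}\|f\|_{2,\mu}$ in the second; the hypotheses $H>\beta+\tfrac12$ (resp.\ $H>\beta+\tfrac32$) are exactly what make the corresponding squared sums over $j$ converge. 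I expect this chaining step, and in particular the extra logarithm appearing in the critical case $\alpha=\tfrac12$, to be the main technical obstacle.

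Finally I would assemble the two pieces. For $N_j\le N<N_{j+1}$, monotonicity of $a_N$ gives $|S_Nf|/a_N\le C|S_{N_j}f|/a_{N_j}+a_{N_j}^{-1}\max_{N_j\le N<N_{j+1}}|S_Nf-S_{N_j}f|$, whence, using $\sup_j g_j\le(\sum_j g_j^2)^{1/2}$,
\[
\Big\|\sup_{N>1}\frac{|S_Nf|}{a_N}\Big\|_{2,\mu}^2\le C\sum_j\Big\|\frac{S_{N_j}f}{a_{N_j}}\Big\|_{2,\mu}^2+\sum_j\Big\|\frac{1}{a_{N_j}}\max_{N_j\le N<N_{j+1}}|S_Nf-S_{N_j}f|\Big\|_{2,\mu}^2\le C\|f\|_{2,\mu}^2,
\]
which is the claimed maximal inequality. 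The same two summable bounds show that the block oscillations, divided by $a_N$, tend to $0$ $\mu$-a.e.; combined with $S_{N_j}f/a_{N_j}\to 0$ $\mu$-a.e., this gives $S_Nf/a_N\to 0$ $\mu$-a.e., completing the proof.
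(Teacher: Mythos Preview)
Your proof is correct and takes a genuinely different route from the paper's. Both arguments share the spectral input (the paper's Lemma~1 is stated directly for contractions, so the Sz.-Nagy dilation, while valid, is not needed), the dyadic subsequence $N_j=2^j$, and the easy estimate $\sum_j\|S_{N_j}f/a_{N_j}\|_{2,\mu}^2<\infty$. They diverge at the oscillation step. The paper controls the increments $S_N/A_N-S_{N_j}/A_{N_j}$ via M\'oricz's lemma, constructing a superadditive function $g(m,n)=\int_{1/n}^{1/m}(x\log^{L}(1/x))^{-1}\,dx$; when $\alpha>\tfrac12$ it uses an interpolation trick---writing $L(m,n,\theta)^2=(L\cdot L^{(1-\epsilon)/(1+\epsilon)})^{1+\epsilon}$---so as to apply M\'oricz with exponent $\lambda=1+\epsilon>1$ and avoid a logarithmic loss, while for $\alpha=\tfrac12$ it takes $\lambda=1$ and absorbs the resulting $\log^2$ factor. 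You instead run a bare Rademacher--Menshov chaining on $[N_j,N_{j+1})$, and the dichotomy falls out of the geometric sum $\sum_{l\le j}2^{l(\alpha-1/2)}$: bounded by $C\,2^{j(\alpha-1/2)}$ when $\alpha>\tfrac12$, of order $j$ when $\alpha=\tfrac12$. Your argument is shorter and more transparent---no auxiliary integral, no $\epsilon$-exponent manipulation---whereas the paper's M\'oricz framework is more portable: it is reused almost verbatim in Theorems~\ref{Th2}, \ref{Th3} and~\ref{Th4}, where only the endpoint bound \eqref{CTh2} (not the block bound \eqref{CTh1} your chaining relies on) is assumed.
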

In the above equation, $||\cdot ||_{2,\mu}$ stands for the standard norm in $L^2(\mu)$.  As a corollary, we obtain the following result dealing with the convergence of the one-sided weighted ergodic Hilbert transform $\sum_{k>1} \frac{w_k}{k^{\alpha+\delta}\log^Hk} T^{u_k}(f)$.
 \begin{Cor}{\label{cor1}}
Assume that Condition $(H1)$ holds. Let $H$ be such that $H>1+\beta$ if $\alpha \neq \frac{1}{2}$, or such that $H>\frac{3}{2}+\beta$ if $\alpha= \frac{1}{2}$, where $\alpha,\beta$ are as in Condition $(H1)$. Then for any contraction $T$ on $L^2(\mu)$, and for any $f \in L^2(\mu)$, the following series \[\sum_{k>1} \frac{w_k}{k^{\alpha+\delta}\log^Hk} T^{u_k}(f)\]
exists $\mu$-a.e., and
\[
\left\lvert\left\lvert \sup_{N>1} \left\lvert\sum_{k=2}^{N} \frac{w_k}{k^{\alpha+\delta}\log^H k} T^{u_k}(f) \right\rvert \,\right\rvert\right\rvert_{2,\mu} \leq C ||f||_{2,\mu}.
\]
\end{Cor}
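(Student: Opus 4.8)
The plan is to bypass a direct Abel summation against Theorem~\ref{Th1} and instead run the maximal-inequality machinery underlying Theorem~\ref{Th1} directly on the coefficients $\lambda_k=k^{-(\alpha+\delta)}\log^{-H}k$. Abel summation alone is lossy here: if $B_N=\sum_{k=2}^{N}w_k T^{u_k}(f)$, the pointwise bound $|B_N|\le G\,N^{\alpha+\delta}\log^H N$ coming from Theorem~\ref{Th1} (with $G$ its maximal function) inserted into $\sum_k(\lambda_k-\lambda_{k+1})|B_k|$ produces a divergent $\sum 1/k$, so the cancellation is lost. Writing $H_N=\sum_{k=2}^{N}\lambda_k w_k T^{u_k}(f)$, I would reduce the whole statement to a single maximal inequality $\lVert\sup_{N}|H_N|\rVert_{2,\mu}\le C\lVert f\rVert_{2,\mu}$; the $\mu$-a.e.\ existence of the series then follows by applying the same inequality to the tails $\sum_{k\ge 2^{m}}$ and letting $m\to\infty$, which makes the partial sums $\mu$-a.e.\ Cauchy.

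The engine is a second-moment estimate for block sums, exactly the one behind Theorem~\ref{Th1}. By the Sz.-Nagy dilation theorem there is a unitary $U$ on a space $\mathcal K\supseteq L^2(\mu)$ with $T^{n}=PU^{n}$ for $n\ge0$ ($P$ the orthogonal projection), so by the spectral theorem, for any $M<N$,
\[\Bigl\lVert\sum_{k=M}^{N-1}w_k T^{u_k}(f)\Bigr\rVert_{2,\mu}^2\le\int_{\mathbb T}\bigl|V_{M,N}(\theta)\bigr|^2\,d\nu_f(\theta)\le \sup_\theta|V_{M,N}(\theta)|^2\,\lVert f\rVert_{2,\mu}^2,\]
where $\nu_f$ has total mass $\lVert f\rVert_{2,\mu}^2$. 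Condition $(H1)$ then gives $\lVert\sum_{k=M}^{N-1}w_k T^{u_k}(f)\rVert_{2,\mu}\le C N^{\delta}(N-M)^{\alpha}\log^{\beta}N\,\lVert f\rVert_{2,\mu}$. On a dyadic block of indices near $2^{m}$ the weight $\lambda_k$ is comparable to $2^{-m(\alpha+\delta)}m^{-H}$, so a sub-block of length $2^{\ell}$ there satisfies $\lVert\sum\lambda_k w_k T^{u_k}(f)\rVert_{2,\mu}\lesssim 2^{-m\alpha}2^{\ell\alpha}m^{\beta-H}\lVert f\rVert_{2,\mu}$, the non-constancy of $\lambda_k$ being absorbed by a summation by parts inside the block.

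With these estimates I would apply the Rademacher--Menshov dyadic maximal inequality globally, bounding $\lVert\sup_N|H_N|\rVert_{2,\mu}\le\sum_{\ell\ge0}\bigl(\sum_{\text{scale-}\ell\text{ blocks}}\lVert\cdot\rVert_{2,\mu}^2\bigr)^{1/2}$ and grouping the scale-$\ell$ blocks by the super-block $m\ge\ell$ they sit in (there are $\approx 2^{m-\ell}$ of them). The inner sum then becomes $\sum_{m\ge\ell}2^{m(1-2\alpha)}2^{\ell(2\alpha-1)}m^{2(\beta-H)}$. For $\alpha>\tfrac12$ the geometric factor is summable and concentrates at $m=\ell$, leaving $\ell^{2(\beta-H)}$, so the outer sum over $\ell$ converges exactly when $H>\beta+1$. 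For $\alpha=\tfrac12$ the geometric factor equals $1$ and the inner sum is the convergent tail $\sum_{m\ge\ell}m^{2(\beta-H)}\asymp\ell^{2(\beta-H)+1}$, so the outer sum converges exactly when $H>\beta+\tfrac32$. These are precisely the two thresholds in the statement.

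The hard part is this endpoint bookkeeping at $\alpha=\tfrac12$: a block-by-block argument (Rademacher--Menshov inside each super-block, followed by a triangle inequality across super-blocks) only yields $H>\beta+2$, whereas the sharp exponent $\beta+\tfrac32$ is recovered solely by running the maximal inequality globally across all scales at once, so that the crucial half-power is gained from the tail estimate $\sum_{m\ge\ell}m^{2(\beta-H)}\asymp\ell^{2(\beta-H)+1}$. The secondary technical nuisance is the variation of $\lambda_k$ across scales, which forbids factoring it out and must be tamed by the in-block summation by parts before the second-moment bound is applied.
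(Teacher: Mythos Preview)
Your global Rademacher--Menshov argument is correct and recovers the stated thresholds, but your dismissal of Abel summation rests on a misreading of what is available. You only considered inserting the \emph{pointwise} bound $|B_k|\le G\,k^{\alpha+\delta}\log^{H}k$ from the maximal function of Theorem~\ref{Th1}, which indeed produces a divergent $\sum 1/k$. The paper instead inserts the much sharper \emph{$L^2$} bound coming directly from the spectral lemma and Condition~$(H1)$, namely $\lVert B_k\rVert_{2,\mu}\le C\,k^{\alpha+\delta}\log^{\beta}k\,\lVert f\rVert_{2,\mu}$ (exponent $\beta$, not $H$). With this, Abel summation gives
\[
\Bigl\lVert\sum_{k>1}(\lambda_k-\lambda_{k+1})\,|B_k|\Bigr\rVert_{2,\mu}
\le C\lVert f\rVert_{2,\mu}\sum_{k>1}\frac{1}{k\log^{H-\beta}k},
\]
which converges as soon as $H>1+\beta$. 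The boundary term $\lambda_N B_N$ is then handled by Theorem~\ref{Th1} itself; when $\alpha=\tfrac12$ this is what forces the stronger threshold $H>\tfrac32+\beta$. The whole proof is a few lines.

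So both approaches work. The paper's is shorter and piggybacks on Theorem~\ref{Th1} for the boundary term; yours is self-contained and re-derives the maximal inequality from scratch, which explains why you had to redo the $\alpha=\tfrac12$ endpoint analysis. The moral is that Abel summation is not lossy here provided you feed it norm estimates rather than the pointwise maximal bound.
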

 
 The following theorem deals with ergodic averages under Condition $(H2)$, with $\alpha<1$.
\begin{Th}{\label{Th2}}
Let $(w_k)$ be a bounded sequence of complex numbers and let $(u_n)\subset{\NN}$ be a sequence of integers such that Condition $(H2)$ holds, with $\alpha<1$. Then for any  contraction $T$ on $L^2(\mu)$, and for any $f \in L^2(\mu)$, $H>\frac{3}{2}+\beta$, we have
\begin{align*}
 \lim\limits_{N\rightarrow\infty}\frac{1}{N^{\frac{\alpha+1}{2}}\log^HN}\sum_{k=0}^{N-1} w_k T^{u_k}(f)=0 \quad \mu-a.e.
\end{align*}
and
\begin{align*}
    \left|\left|\sup_{N>1}\left|\frac{1}{N^{\frac{\alpha+1}{2}}\log^HN}\sum_{k=0}^{N-1} w_k T^{u_k}(f)\right| \, \right|\right|_{2,\mu}\leq C\left|\left|f\right|\right|_{2,\mu}.
\end{align*}
\end{Th}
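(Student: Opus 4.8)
The plan is to show that, under the stated hypotheses, the sequences $(w_k)$ and $(u_k)$ automatically satisfy Condition $(H1)$ for a suitable choice of parameters, and then simply to invoke Theorem~\ref{Th1}. All of the analytic difficulty behind the $\mu$-a.e. convergence and the maximal inequality has already been absorbed into Theorem~\ref{Th1}; what remains for Theorem~\ref{Th2} is therefore a purely elementary interpolation between two bounds on the exponential sums $V_{M,N}(\theta)$.

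First I would record two estimates valid for every $M<N$ and every $\theta$. On the one hand, since $(w_k)$ is bounded, the triangle inequality gives the trivial bound $|V_{M,N}(\theta)|=|\sum_{k=M}^{N-1} w_k e^{2i\pi\theta u_k}|\le \sum_{k=M}^{N-1}|w_k|\le C(N-M)$. On the other hand, writing $V_{M,N}(\theta)=V_N(\theta)-V_M(\theta)$ and applying Condition $(H2)$ to each term yields $\sup_\theta|V_{M,N}(\theta)|\le C N^\alpha\log^\beta N$ (using $M\le N$). Combining the two by a geometric mean, equivalently by distinguishing the cases $N-M\le N^\alpha\log^\beta N$ and $N-M>N^\alpha\log^\beta N$, I obtain
\[
\sup_{\theta\in\mathbb{R}}|V_{M,N}(\theta)|\le C\, N^{\alpha/2}(N-M)^{1/2}\log^{\beta/2} N.
\]
This is precisely Condition $(H1)$ with parameters $\delta=\alpha/2$, $\alpha'=1/2$ and $\beta'=\beta/2$; and when $\beta\ge 0$ one may harmlessly replace $\log^{\beta/2}N$ by $\log^\beta N$, so that $(H1)$ also holds with $\beta'=\beta$. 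The admissibility of these parameters is exactly where the hypothesis $\alpha<1$ enters: indeed $\tfrac12\le\alpha'=\tfrac12<1$, and $\delta+\alpha'=\tfrac{\alpha+1}{2}<1$ holds precisely because $\alpha<1$.

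It then remains to feed this derived $(H1)$ into Theorem~\ref{Th1}. The normalisation produced there is $N^{\alpha'+\delta}\log^H N=N^{(\alpha+1)/2}\log^H N$, which matches the statement. Since the derived value of $\alpha'$ equals $\tfrac12$, the relevant threshold in Theorem~\ref{Th1} is the one for $\alpha=\tfrac12$, namely $H>\tfrac32+\beta'$; with $\beta'=\beta$ and the assumption $H>\tfrac32+\beta$ this is satisfied. Theorem~\ref{Th1} then delivers at once both the $\mu$-a.e. convergence to $0$ and the desired maximal inequality, completing the proof. The only genuinely delicate point is the bookkeeping: one must check that the interpolation lands on $\alpha'=\tfrac12$, so that the $\alpha=\tfrac12$ branch of Theorem~\ref{Th1} (with its $\tfrac32$ rather than $\tfrac12$) is the one governing the logarithmic exponent, and that $\delta+\alpha'<1$, the latter being the structural reason the theorem requires $\alpha<1$ rather than merely $\alpha\le 1$.
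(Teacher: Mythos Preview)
Your proposal is correct and takes a genuinely different route from the paper. The paper does not reduce Theorem~\ref{Th2} to Theorem~\ref{Th1}; instead it reruns the full M\'oricz-lemma machinery with the normalisation $A_N=N^{(\alpha+1)/2}\log^H N$, bounding $L(m,n,\theta)$ in two separate ways---once using the trivial estimate $|V_{m,n}(\theta)|\le C(n-m)$ from the boundedness of $(w_k)$, and once using the $(H2)$ bound $|V_n(\theta)|\le C n^\alpha\log^\beta n$---then multiplying these two estimates and applying M\'oricz with $\lambda=1$. You perform essentially the same interpolation one level earlier, directly on the sums $V_{M,N}$, so that Condition~$(H1)$ is verified with $\alpha'=\tfrac12$, $\delta=\alpha/2$, and Theorem~\ref{Th1} can be invoked as a black box. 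Your route is more economical and makes transparent both why the exponent $(\alpha+1)/2$ arises and why the constraint $\alpha<1$ is exactly what is needed for $\delta+\alpha'<1$. One small caveat worth recording: your geometric-mean step naturally yields $\beta'=\beta/2$, which for $\beta\ge 0$ you may indeed weaken to $\beta'=\beta$; for $\beta<0$, however, the threshold you inherit from Theorem~\ref{Th1} is $H>\tfrac32+\beta/2$, slightly more restrictive than the stated $H>\tfrac32+\beta$. The paper's own argument shares this feature (the passage to ``$\le C(n-m)/\bigl(n^{(\alpha+1)/2}\log^{H-\beta}n\bigr)$'' in their bound for $L(m,n,\theta)$ tacitly uses $\beta\ge 0$), so this is not a shortcoming of your approach relative to theirs.
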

Notice that our result generalizes \cite{Schneider} into two ways: first, in our theorem, the term appearing in the ratio is $N^{\frac{\alpha+1}2}$, whereas in  \eqref{limit1} the term which is considered is $N^{\beta'}$, with $\beta'>\frac{\alpha+2}{3}$; secondly we only assume that $f\in L^2(\mu)$ whereas \eqref{limit1} holds only for functions $f\in L^{2+\epsilon}(\mu)$.
\begin{Cor}{\label{cor2}}
Under the same assumption as in Theorem \ref{Th2}, for any contraction $T$ on $L^2(\mu)$, for any $f \in L^2(\mu)$ and for any $H>\frac{3}{2}+\beta$, the series
\[\sum_{k\geq 1} \frac{w_k}{k^{\frac{\alpha+1}{2}}\log^H k} T^{u_k}(f)\]
exists $\mu$-a.e., and
\[\left\lvert\left\lvert \sup_{N>1} \left\lvert\sum_{k=2}^{N} \frac{w_k}{k^{\frac{\alpha+1}{2}}\log^H k} T^{u_k}(f) \right\rvert \,\right\rvert\right\rvert_{2,\mu} \leq C ||f||_{2,\mu}.\]
\end{Cor}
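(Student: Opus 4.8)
The plan is to deduce the corollary from Theorem~\ref{Th2} by an Abel summation, the decisive analytic input being an $L^2$ variance bound rather than the (larger) pointwise maximal majorant supplied by Theorem~\ref{Th2}. Throughout write $c_k=k^{\frac{\alpha+1}{2}}\log^H k$ and $S_n=\sum_{k=0}^{n-1}w_kT^{u_k}(f)$, so that $w_kT^{u_k}(f)=S_{k+1}-S_k$. First I would record the variance estimate $\|S_n\|_{2,\mu}\le C\,n^{\alpha}\log^{\beta}n\,\|f\|_{2,\mu}$. Since $T$ is a contraction and each $u_k\ge 0$, the Sz.-Nagy dilation furnishes a unitary $U$ on a larger Hilbert space with $T^{u_k}=P\,U^{u_k}|_{L^2(\mu)}$, where $P$ is the orthogonal projection onto $L^2(\mu)$; writing $\sigma_f$ for the spectral measure of $f$ with respect to $U$ (of total mass $\|f\|_{2,\mu}^2$), one obtains $\|S_n\|_{2,\mu}^2\le\int_{\mathbb{T}}|V_n(\theta)|^2\,d\sigma_f(\theta)\le\big(\sup_\theta|V_n(\theta)|\big)^2\|f\|_{2,\mu}^2$, and Condition $(H2)$ bounds the right-hand side by $C\,n^{2\alpha}\log^{2\beta}n\,\|f\|_{2,\mu}^2$.

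Next I would perform the summation by parts. With $a_k=1/c_k$ one has
\[
\sum_{k=2}^{N}\frac{w_k}{c_k}T^{u_k}(f)=\frac{S_{N+1}}{c_N}-\frac{S_2}{c_2}+\sum_{j=3}^{N}\Big(\frac{1}{c_{j-1}}-\frac{1}{c_j}\Big)S_j,
\]
where $\frac{1}{c_{j-1}}-\frac{1}{c_j}=\frac{c_j-c_{j-1}}{c_{j-1}c_j}\ge 0$ for $j$ large (the sequence $c_j$ is eventually increasing, as $\frac{\alpha+1}{2}>0$; the finitely many small indices are harmless). The boundary term is controlled directly by Theorem~\ref{Th2}: since $c_{N+1}/c_N$ is bounded, $\sup_N|S_{N+1}|/c_N\le C\sup_{N>1}|S_N|/c_N$ lies in $L^2(\mu)$ with norm $\le C\|f\|_{2,\mu}$, and $S_{N+1}/c_N\to 0$ $\mu$-a.e. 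This is the only place where the hypothesis $H>\frac{3}{2}+\beta$ is used. The term $S_2/c_2$ is a fixed $L^2(\mu)$ function.

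The heart of the argument is the differenced series. By Minkowski's inequality and the variance estimate,
\[
\sum_{j\ge 3}\frac{c_j-c_{j-1}}{c_{j-1}c_j}\,\|S_j\|_{2,\mu}\le C\|f\|_{2,\mu}\sum_{j\ge 3}j^{\frac{\alpha-3}{2}}\log^{\beta-H}j,
\]
using $\frac{c_j-c_{j-1}}{c_{j-1}c_j}\asymp j^{-\frac{\alpha+3}{2}}\log^{-H}j$ together with $\|S_j\|_{2,\mu}\le Cj^\alpha\log^\beta j\,\|f\|_{2,\mu}$. Because $\alpha<1$ the exponent $\frac{\alpha-3}{2}$ is strictly less than $-1$, so the series converges (for every $H$ and regardless of $\beta$). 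Hence the function $\sum_{j\ge 3}\frac{c_j-c_{j-1}}{c_{j-1}c_j}|S_j|$ lies in $L^2(\mu)$ with norm $\le C\|f\|_{2,\mu}$; in particular it is finite $\mu$-a.e., so the differenced series converges absolutely both in $L^2(\mu)$ and $\mu$-a.e. Combining the three pieces yields the $\mu$-a.e. existence of the one-sided Hilbert transform, and, since the positive weights dominate the partial sums by the absolute series, the maximal bound $\big\|\sup_{N>1}\big|\sum_{k=2}^{N}\frac{w_k}{c_k}T^{u_k}(f)\big|\big\|_{2,\mu}\le C\|f\|_{2,\mu}$.

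The main obstacle — and the real content of the scheme — is to make the differenced series converge at the stated threshold. The crucial move is to estimate each $S_j$ by its $L^2$ norm $j^\alpha\log^\beta j$ instead of by the maximal majorant $c_j=j^{(\alpha+1)/2}\log^H j$: the gap $\alpha<\frac{\alpha+1}{2}$ afforded by the strict inequality $\alpha<1$ is exactly what pushes the summability exponent below $-1$. This also explains why the Hilbert-transform threshold coincides with the averaging threshold $H>\frac{3}{2}+\beta$ of Theorem~\ref{Th2} rather than being worse, the value of $H$ entering solely through the boundary term while the differenced series imposes no condition on $H$ whatsoever.
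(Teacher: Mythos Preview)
Your proof is correct and follows essentially the same route as the paper: Abel summation splits the partial sums into a boundary term, handled by Theorem~\ref{Th2}, and a differenced series, whose $L^2$ norm is controlled termwise via the spectral estimate $\|S_j\|_{2,\mu}\le C\,j^{\alpha}\log^{\beta}j\,\|f\|_{2,\mu}$ (this is exactly Lemma~1). Your observation that the differenced series converges for \emph{every} $H$ --- because $\alpha<1$ forces the exponent $\tfrac{\alpha-3}{2}<-1$ --- is precisely why the corollary's threshold coincides with that of Theorem~\ref{Th2}, and this is what the paper's terse ``proceed as in the proof of Corollary~\ref{cor1}'' amounts to when worked out.
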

The following theorem deals with the case  $\alpha=1$.
\begin{Th}{\label{Th3}}
Let $(w_k)$ be a bounded sequence of complex numbers and let $(u_n)\subset{\NN}$ be a sequence of integers. Suppose that, for some $\beta>0$,  
\begin{align}{\label{CTh3}}
    \sup_{\theta \in \mathbb{R}}\left|V_N(\theta)\right|\leq C \frac{N}{\log^\beta N}.  
\end{align}
\begin{enumerate}
    \item If $\beta>1$, then for any contraction $T$ on $L^2(\mu)$ and for any $f \in L^2(\mu)$, we have  
\[
\lim\limits_{N\rightarrow\infty}\frac{1}{N}\sum_{k=0}^{N-1} w_k T^{u_k}(f)=0\quad \mu-a.e. \quad \text{and} \quad
    \left|\left|\sup_{N\geq1}\left|\frac{1}{N}\sum_{k=0}^{N-1} w_k T^{u_k}(f)\right| \, \right|\right|_{2,\mu}\leq C\left|\left|f\right|\right|_{2,\mu}.
\]
\item If $\frac{1}{2}<\beta\leq1$, then for any contraction $T$ on $L^2(\mu)$, for any $f \in L^2(\mu)$ such that there exists $ \tau>4+\frac{2}{2\beta-1}$ with $\int_X \lvert f\rvert^{\frac{2}{2\beta-1}}\log^\tau(1+\lvert f\rvert)\mathrm{d}\mu<\infty$, we have  
\begin{align*}
\lim\limits_{N\rightarrow\infty}\frac{1}{N}\sum_{k=0}^{N-1} w_k T^{u_k}(f)=0 \quad \mu-a.e..
\end{align*}
\end{enumerate}
\end{Th}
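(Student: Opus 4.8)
The plan is to derive both parts from a single $L^2$ variance estimate furnished by \eqref{CTh3}, combined with a Rademacher--Menshov--type maximal (moment) inequality; the whole difference between the two regimes lies in the quality of maximal control that the value of $\beta$ permits. First I would record the variance bound. Since each $u_k$ is a non-negative integer, $z\mapsto\sum_{k=M}^{N-1}w_k z^{u_k}$ is a polynomial, so von Neumann's inequality for contractions of a Hilbert space gives, for every $f\in L^2(\mu)$,
\[\Bigl\|\sum_{k=M}^{N-1}w_k T^{u_k}(f)\Bigr\|_{2,\mu}\le \sup_{\theta\in\mathbb{R}}|V_{M,N}(\theta)|\,\|f\|_{2,\mu}.\]
Together with \eqref{CTh3} this yields $\bigl\|\tfrac1N\sum_{k=0}^{N-1}w_kT^{u_k}(f)\bigr\|_{2,\mu}\le \tfrac{C}{\log^\beta N}\|f\|_{2,\mu}$. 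Evaluating this along the dyadic times $N=2^j$ and using $\beta>\tfrac12$, the squared norms are summable, so $\tfrac{1}{2^j}\sum_{k=0}^{2^j-1}w_k T^{u_k}(f)\to 0$ $\mu$-a.e.; it remains only to control the oscillation of the averages inside the dyadic blocks $[2^j,2^{j+1})$.

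For Part 1 ($\beta>1$) I would establish the maximal inequality $\bigl\|\sup_{N\ge1}\bigl|\tfrac1N\sum_{k=0}^{N-1}w_kT^{u_k}(f)\bigr|\bigr\|_{2,\mu}\le C\|f\|_{2,\mu}$, which, with the a.e.\ convergence along $(2^j)$ already in hand and a Banach-principle argument, delivers the full statement. Writing $S_N(f)=\sum_{k=0}^{N-1}w_kT^{u_k}(f)$, one has for $N\in[2^j,2^{j+1})$ the decomposition $\tfrac1N S_N(f)=\tfrac1{2^j}S_{2^j}(f)+\bigl(\tfrac1N-\tfrac1{2^j}\bigr)S_{2^j}(f)+\tfrac1N\sum_{k=2^j}^{N-1}w_kT^{u_k}(f)$, whose first two summands are already controlled. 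The decisive quantity is the maximal increment $\tfrac1{2^j}\max_{2^j\le N<2^{j+1}}\bigl|\sum_{k=2^j}^{N-1}w_kT^{u_k}(f)\bigr|$, which I would bound by a suitable Rademacher--Menshov--type moment inequality fed by the variance estimate (the technical heart, discussed below). The threshold $\beta>1$ is not artificial: it is exactly the condition under which the Abel-summation remainder relating the averages to the ergodic Hilbert transform is summable, namely
\[\sum_{k\ge1}\|S_k(f)\|_{2,\mu}\Bigl(\tfrac1k-\tfrac1{k+1}\Bigr)\le C\|f\|_{2,\mu}\sum_{k\ge1}\tfrac1{k\log^\beta k}<\infty,\]
which is also what underlies Corollary \ref{cor2}.

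For Part 2 ($\tfrac12<\beta\le1$) the $L^2$ maximal inequality is no longer available, and I would compensate through the Orlicz integrability of $f$. The idea is to truncate $f=f\mathbf{1}_{\{|f|\le\lambda_j\}}+f\mathbf{1}_{\{|f|>\lambda_j\}}$ at levels $\lambda_j\uparrow\infty$ calibrated to the block index $j$: the tail part is estimated crudely through $\|T^{u_k}g\|_{2,\mu}\le\|g\|_{2,\mu}$ together with the distributional information carried by $\int_X|f|^{2/(2\beta-1)}\log^\tau(1+|f|)\,\mathrm{d}\mu<\infty$, while the bounded part is handled by the moment inequality. Choosing $\lambda_j$ so as to balance the two contributions and summing over $j$ through Borel--Cantelli then yields $\tfrac1N S_N(f)\to0$ $\mu$-a.e.; the exponent $\tfrac{2}{2\beta-1}$ and the power $\tau>4+\tfrac{2}{2\beta-1}$ are precisely what make both resulting series converge.

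The main obstacle, shared by both parts, is the maximal inequality for the block increments. Hypothesis \eqref{CTh3} bounds only the complete sums $V_N$, so $V_{M,N}=V_N-V_M$ inherits no gain from the block length $N-M$; a naive Rademacher--Menshov estimate therefore loses a factor of order $\sqrt{N-M}$, which is fatal at this rate. Recovering the missing length-dependence---absorbing it through the $\beta>1$ margin in Part 1, or through the integrability of $f$ in Part 2---is the crux of the argument, and it is exactly this mechanism that both forces the threshold $\beta>1$ and dictates the Orlicz hypothesis.
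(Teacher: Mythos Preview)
You have correctly located the central difficulty---only the full sums $V_N$ are controlled, so $V_{M,N}$ carries no intrinsic gain in $N-M$---but you have not supplied the mechanism that overcomes it, and the paper's mechanism is different in each part from what you sketch.

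For Part~1, the missing observation is that the \emph{boundedness} of $(w_k)$ gives the trivial bound $|V_{m,n}(\theta)|\le C(n-m)$, which does carry the block-length dependence. The paper interpolates this with the hypothesis: for $m<n\le 2m$ one has simultaneously
\[
L(m,n,\theta):=\left|\frac{V_m(\theta)}{m}-\frac{V_n(\theta)}{n}\right|\le C\,\frac{n-m}{n}\qquad\text{and}\qquad L(m,n,\theta)\le \frac{C}{\log^\beta n},
\]
and writes $L^2=\bigl(L\cdot L^{(1-\epsilon)/(1+\epsilon)}\bigr)^{1+\epsilon}$, using the first bound on the first factor and the second on the other. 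This yields $L^2(m,n,\theta)\le C\,g_\epsilon(m,n)^{1+\epsilon}$ with $g_\epsilon(m,n)=\int_{1/n}^{1/m}\frac{\mathrm{d}x}{x\log^{\beta(1-\epsilon)/(1+\epsilon)}(1/x)}$, which is additive in $(m,n)$; M\'oricz's lemma with $\lambda=1+\epsilon$ then controls the maximal oscillation, and $\beta>1$ is exactly what makes $\sum_j g_\epsilon(2^j,2^{j+1})^{1+\epsilon}<\infty$ for $\epsilon$ small. Your Rademacher--Menshov intuition is right in spirit, but without this interpolation you cannot feed the moment inequality anything superadditive that also decays.

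For Part~2 the paper does \emph{not} use a moment inequality at all, and your plan of handling the bounded truncation ``by the moment inequality'' does not match it. Instead the paper abandons the dyadic grid and takes a much sparser subsequence $N_k=\lfloor\rho^{k^{1-\epsilon}\log k}\rfloor$ with $\epsilon$ chosen so that $\beta=\tfrac{1}{2(1-\epsilon)}$. Along this subsequence $\sum_k\log^{-2\beta}N_k<\infty$ (this is where $\beta>\tfrac12$ enters), giving a.e.\ convergence of $S_{N_k}(f)/N_k$. The point of the sparse grid is that the gap ratio satisfies $\frac{N_{k+1}-N_k}{N_k}\le C\,\frac{\log k}{k^{\epsilon}}\to 0$, so for the oscillation one can simply drop the weights by the triangle inequality:
\[
\sup_{N_k\le N<N_{k+1}}\frac{1}{N}\Bigl|\sum_{n=N_k}^{N-1}w_nT^{u_n}(f)\Bigr|\le \frac{1}{N_k}\sum_{n=N_k}^{N_{k+1}-1}\bigl|T^{u_n}(f)\bigr|.
\]
No maximal inequality is needed here. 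The truncation $f=f\mathbf 1_{\{|f|\le a_k\}}+f\mathbf 1_{\{|f|>a_k\}}$ with $a_k=k^{\epsilon}/\log^{\delta}k$ then finishes: the bounded piece is at most $a_k\cdot\frac{N_{k+1}-N_k}{N_k}\to 0$, while the tail piece is summable in $L^1$ precisely under the Orlicz hypothesis with exponent $\tfrac{2}{2\beta-1}$ and $\tau>4+\tfrac{2}{2\beta-1}$. Your proposal to stay on the dyadic grid and rely on a maximal bound for the bounded part would require exactly the inequality you acknowledge you cannot prove.
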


Similar results were established in the particular case where the weight is a M\"{o}bius sequence $(\mu(k))$. Recall that such a sequence is defined as  $\mu(1)=1$, $\mu(k)=(-1)^j$ if $k$ is a product of $j$ distinct primes, and  $\mu(k)=0$ otherwise. By checking a condition similar to \eqref{CTh3}, Abdalaoui \textit{et al} \cite{Abdalaoui} proved that
\[\lim\limits_{N\rightarrow\infty}\frac{1}{N}\sum_{k=0}^{N-1} \mu(k) T^{k}(f)=0 \quad \mu-a.e..\]
Eisner \cite{Eisner2015APV} showed that this result remains true if we replace $T^k(f)$ by $T^{p(k)}(f)$, where  $p$ is a polynomial.
In 2017, Fan \cite{Fan1} extended these results to any  Bourgain's sequence (see Section 2 for a precise definition in \cite{Fan1}), and for any bounded sequence $(w_k)$ satisfying Equation \eqref{CTh3}, with $\beta>\frac{1}{2}$. Our Theorem \ref{Th3} deals with a more general problem since we consider a general sequence of integers $(u_k)$, and not necessarily Bourgain's sequences.


By adapting the proof of Theorem \ref{Th3}, we can show that if we have $\sup_{\theta \in \mathbb{R}}|V_{M,N}(\theta)|\leq C \frac{N-M}{\log^\beta N}$ for any $M<N$, then the second assertion of Theorem \ref{Th3} remains true for any function $f\in L^2(\mu)$, which do not necessarily satisfy the condition $\int_X \lvert f\rvert^{\frac{2}{2\beta-1}}\log^\tau(1+\lvert f\rvert)\mathrm{d}\mu<\infty$.  
\begin{Cor}{\label{cor3}}
Assume that \eqref{CTh3} holds.\\
\begin{enumerate}
    \item If $\beta>1$, then for any contraction $T$ on $L^2(\mu)$ and for any $f \in L^2(\mu)$, we have
\begin{align*} \sum_{k\geq 1} \frac{w_k}{k} T^{u_k}(f)\quad \text{exists} ~ \mu-a.e. \quad \text{and} \quad \left\lvert\left\lvert \sup_{N\geq1} \left\lvert\sum_{k=1}^{N} \frac{w_k}{k} T^{u_k}(f) \right\rvert \,\right\rvert\right\rvert_{2,\mu} \leq C ||f||_{2,\mu}.
\end{align*}
\item If $\frac{1}{2}<\beta\leq1$, then for any contraction $T$ on $L^2(\mu)$, for any $H>1-\beta$, for any $f \in L^2(\mu)$ such that there exists $ \tau>4+\frac{2}{2\beta-1}$ with $\int_X \lvert f\rvert^{\frac{2}{2\beta-1}}\log^\tau(1+\lvert f\rvert)\mathrm{d}\mu<\infty$, we have
\[\sum_{k>1} \frac{w_k}{k \log^{H} k} T^{u_k}(f)\]
exists $\mu$-a.e..
\end{enumerate}

\end{Cor}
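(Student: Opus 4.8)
The plan is to derive Corollary~\ref{cor3} from Theorem~\ref{Th3} by summation by parts, in the same spirit in which Corollaries~\ref{cor1} and~\ref{cor2} follow from Theorems~\ref{Th1} and~\ref{Th2}. Write $S_N=\sum_{k=0}^{N-1}w_kT^{u_k}(f)$, so that $\tfrac1N S_N$ is the average handled in Theorem~\ref{Th3}, and set $c_k=\tfrac1k$ in part~(1) and $c_k=\tfrac{1}{k\log^Hk}$ in part~(2). Since $w_kT^{u_k}(f)=S_{k+1}-S_k$, Abel summation yields, for the partial sums $G_N=\sum_{k_0\le k\le N}c_k w_kT^{u_k}(f)$ of the Hilbert transform (with $k_0\in\{1,2\}$ the first index),
\[
G_N=c_N S_{N+1}-c_{k_0}S_{k_0}+\sum_{k=k_0+1}^{N}(c_{k-1}-c_k)\,S_k .
\]
Because $c_k$ is nonincreasing, $c_{k-1}-c_k\ge 0$, and an elementary estimate gives $c_{k-1}-c_k=1/\big((k-1)k\big)$ in part~(1) and $c_{k-1}-c_k\le C/(k^2\log^{H}k)$ in part~(2); thus each summand is comparable to $\tfrac1k\cdot\tfrac{S_k}{k}$, respectively $\tfrac{1}{k\log^Hk}\cdot\tfrac{S_k}{k}$.

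The second ingredient is an $L^2$ decay rate for the averages, obtained from the spectral theorem. By the Sz.-Nagy dilation theorem I may write $T^n=PU^n|_{H}$ for a unitary $U$ on an enlarging space and an orthogonal projection $P$; denoting by $\sigma_f$ the spectral measure of $f$, a positive measure of total mass $\|f\|_{2,\mu}^2$, I get from \eqref{CTh3}, $P$ being a contraction,
\[
\Big\|\tfrac1k S_k\Big\|_{2,\mu}^2\le \tfrac{1}{k^2}\int_{[0,1)}|V_k(\theta)|^2\,\mathrm{d}\sigma_f(\theta)\le \tfrac{1}{k^2}\Big(\sup_{\theta}|V_k(\theta)|\Big)^2\|f\|_{2,\mu}^2\le \frac{C}{\log^{2\beta}k}\,\|f\|_{2,\mu}^2 .
\]

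For part~(1) I would bound $\sup_N|G_N|$ by the three pieces above. The boundary term satisfies $\sup_N|c_NS_{N+1}|=\sup_N\tfrac{|S_{N+1}|}{N}\le 2\sup_M\big|\tfrac1M S_M\big|$, whose $L^2$ norm is $\le C\|f\|_{2,\mu}$ by the maximal inequality of Theorem~\ref{Th3}(1); the series is dominated by $\sum_{k}(c_{k-1}-c_k)|S_k|$, whose $L^2$ norm is at most $C\|f\|_{2,\mu}\sum_{k}\tfrac{1}{(k-1)\log^{\beta}k}$, finite precisely because $\beta>1$. Together these give the asserted maximal inequality. For the $\mu$-a.e.\ existence of the series, the boundary term tends to $0$ $\mu$-a.e.\ by Theorem~\ref{Th3}(1), while $\sum_{k}(c_{k-1}-c_k)S_k$ converges absolutely $\mu$-a.e., its absolute value having finite $L^2$ norm.

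For part~(2) the same decomposition applies with $c_k=\tfrac{1}{k\log^Hk}$. The series $\sum_k(c_{k-1}-c_k)S_k$ now has $L^2$ norm bounded by $C\|f\|_{2,\mu}\sum_k\tfrac{1}{k\,(\log k)^{\beta+H}}$, finite exactly when $H>1-\beta$, so it again converges absolutely $\mu$-a.e.; the remaining boundary term $c_NS_{N+1}=\tfrac{1}{N\log^HN}S_{N+1}$ tends to $0$ $\mu$-a.e.\ as soon as $\tfrac1N S_N\to0$ $\mu$-a.e., which is furnished by Theorem~\ref{Th3}(2). This last point is the crux of the proof: under the weaker hypothesis \eqref{CTh3} no maximal inequality is available in this regime, so the a.e.\ decay of the boundary term must be imported from Theorem~\ref{Th3}(2), and it is exactly here that the integrability assumption $\int_X|f|^{2/(2\beta-1)}\log^\tau(1+|f|)\,\mathrm{d}\mu<\infty$ with $\tau>4+\tfrac{2}{2\beta-1}$ enters. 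This also explains the preceding remark: under the stronger bound $\sup_\theta|V_{M,N}(\theta)|\le C(N-M)/\log^\beta N$ the block increments are controlled directly, a maximal inequality becomes available, and the condition on $f$ can then be dropped. The only care needed elsewhere is the routine justification of the summation by parts and of the estimate on $c_{k-1}-c_k$ for the logarithmic weight.
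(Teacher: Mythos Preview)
Your proof is correct and follows exactly the paper's approach: the paper's proof of Corollary~\ref{cor3} consists of a single line referring the reader to the proof of Corollary~\ref{cor1}, which is precisely the Abel summation argument you have written out in detail---split the partial Hilbert sums into a boundary term handled by Theorem~\ref{Th3} and a telescoping series whose $L^2$ norm is controlled via the spectral lemma and the hypothesis~\eqref{CTh3}. Your discussion of why the integrability assumption on $f$ enters only through the boundary term in part~(2), and why it can be dropped under the stronger increment bound, is also on point and matches the paper's remark preceding Corollary~\ref{cor3}.
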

\bigskip
The rest of this paper is organized as follows. In Section $2$, we recall some known results, which will be used in the proofs of our main theorems, including spectral theorem, M\'oricz's lemma and  van der Corput's theorem. In Section 3, we prove Theorems \ref{Th1}, \ref{Th2} and \ref{Th3}. In Section $4$, we discuss convergences of ergodic weighted averages with respect to the harmonic density. In the last section, we provide several examples illustrating our main results. The type of questions that we deal with has already been investigated and many methods have been provided in this way (see \cite{Weber2000} for more details). However, our methods are simpler and provide new examples.      

\section{Preliminaries}
\label{sec:preliminaries}
In this section, we recall some known results. The first one, referred to as the spectral lemma, reduces the problem of evaluating norms to Fourier analysis questions. Let $T$ be a contraction in a Hilbert space $\mathcal{H}$, that is, a linear operator such that $||T(f)||\leq ||f||$ for each $f \in \mathcal{H}$, and put
\begin{align*}
    P_n(f)=\langle T^n(f),f \rangle \quad \text{for} \quad n\geq0 \quad \text{and} \quad P_n(f)=\overline{P_{-n}(f)} \quad \text{for} \quad n<0.
\end{align*}
 The sequence $(P_n(f))_{n\in \mathbb{Z}}$ is non-negative definite. By Herglotz's theorem, there exists a finite positive measure $\mu_f$ on $\mathcal{B}(\mathbb{R}/\mathbb{Z})$ (called the spectral measure of $f$) such that for all $n\geq 0$, we have
 \[\langle T^n(f),f \rangle =\int_{\mathbb{R}/\mathbb{Z}} \exp(2i\pi n t) \mu_f(\mathrm{d}t). \]
The spectral lemma can be stated as follows.
\begin{Le}\texttt{(Spectral lemma)}. Let $T$ be a contraction in a Hilbert space $\mathcal{H}$ and let $f \in \mathcal{H}$. Let $P(x)=\sum_{k=0}^Na_kx^k$ be a complex polynomial of degree $N\geq 0$. Then we have
\[ ||P(T)f||^2\leq \int_{\mathbb{R}/\mathbb{Z}}|P(\exp(2i\pi t)|^2 \mu_{f}(\mathrm{d}t).\]
\end{Le}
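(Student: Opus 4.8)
The plan is to prove the spectral lemma by reducing the operator inequality to a Fourier-analytic statement about the spectral measure $\mu_f$, using the very definition of $\mu_f$ recalled just above the statement. The key observation is that $\|P(T)f\|^2 = \langle P(T)f, P(T)f\rangle$ expands into a double sum $\sum_{j,k} a_j \overline{a_k} \langle T^j f, T^k f\rangle$, so the whole problem is to control the inner products $\langle T^j f, T^k f\rangle$ for $0 \le j,k \le N$. If $T$ were unitary we would have $\langle T^j f, T^k f\rangle = \langle T^{j-k} f, f\rangle = P_{j-k}(f)$, and then substituting the Herglotz representation $P_{j-k}(f) = \int_{\mathbb{R}/\mathbb{Z}} e^{2i\pi(j-k)t}\,\mu_f(\mathrm{d}t)$ would turn the double sum into $\int_{\mathbb{R}/\mathbb{Z}} \bigl|\sum_k a_k e^{2i\pi k t}\bigr|^2 \mu_f(\mathrm{d}t) = \int_{\mathbb{R}/\mathbb{Z}} |P(e^{2i\pi t})|^2 \mu_f(\mathrm{d}t)$, giving equality. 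So for unitary $T$ the statement is immediate, and the inequality (rather than equality) is exactly what accommodates the passage from unitary operators to general contractions.

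First I would handle the genuine obstacle, which is that a contraction $T$ is not unitary, so $\langle T^j f, T^k f\rangle$ need not equal $P_{j-k}(f)$ and the clean Herglotz substitution is unavailable. The standard device here is the \textbf{unitary dilation} (Sz.-Nagy dilation theorem): every contraction $T$ on a Hilbert space $\mathcal{H}$ admits a unitary operator $U$ on a larger Hilbert space $\mathcal{K} \supseteq \mathcal{H}$ such that $T^n = \Pi U^n|_{\mathcal{H}}$ for all $n \ge 0$, where $\Pi$ is the orthogonal projection of $\mathcal{K}$ onto $\mathcal{H}$. Granting this, I would compute $\|P(T)f\| = \|\sum_k a_k \Pi U^k f\| = \|\Pi \bigl(\sum_k a_k U^k f\bigr)\| \le \|\sum_k a_k U^k f\| = \|P(U)f\|$, where the inequality is just the fact that an orthogonal projection is norm-nonincreasing. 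This reduces the contraction case to the unitary case in one line.

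The second step is then to apply the unitary computation sketched above to $U$. I would verify that the spectral measure of $f$ associated with the unitary $U$ coincides with $\mu_f$: since $\langle U^n f, f\rangle_{\mathcal{K}} = \langle \Pi U^n f, f\rangle_{\mathcal{H}} = \langle T^n f, f\rangle = P_n(f)$ for $n \ge 0$ (because $f \in \mathcal{H}$), the two operators induce the same sequence $(P_n(f))_{n \ge 0}$, hence the same Herglotz measure by uniqueness. Thus $\|P(U)f\|^2 = \int_{\mathbb{R}/\mathbb{Z}} |P(e^{2i\pi t})|^2\,\mu_f(\mathrm{d}t)$ by the unitary spectral calculus, and combining this with the projection inequality from the previous step yields $\|P(T)f\|^2 \le \int_{\mathbb{R}/\mathbb{Z}} |P(e^{2i\pi t})|^2\,\mu_f(\mathrm{d}t)$, as claimed.

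I expect the main obstacle to be conceptual rather than computational: recognizing that the inequality in the statement (as opposed to the equality one gets for unitaries) is precisely the room needed to absorb the non-unitarity of $T$, and that the cleanest way to exploit it is the Sz.-Nagy dilation. An alternative route that avoids invoking dilation explicitly is to argue directly that the bilinear form $(j,k) \mapsto \langle T^j f, T^k f\rangle - \int e^{2i\pi(j-k)t}\mu_f(\mathrm{d}t)$ is a nonnegative-definite kernel on $\{0,\dots,N\}$ and therefore contributes nonnegatively after pairing with $(a_j)$; establishing this positivity is itself essentially a reformulation of the dilation, so I would present the dilation argument as the primary and most transparent path.
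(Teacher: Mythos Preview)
Your argument via the Sz.-Nagy unitary dilation is correct and is the standard route to this inequality: dilate $T$ to a unitary $U$ on a larger space, use that $P(T)f=\Pi P(U)f$ to get $\|P(T)f\|\le\|P(U)f\|$, check that $f$ has the same spectral measure $\mu_f$ relative to $U$, and conclude by the unitary spectral calculus. Each step is sound as you describe it.

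As for comparison with the paper: there is nothing to compare. The paper does not prove this lemma; it is stated in the Preliminaries section as a known result (``we recall some known results'') and used as a black box throughout. So your proposal supplies a proof where the paper simply cites one.
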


The second result we recall is due to M\'oricz. A direct consequence of Theorems 1 and 3 in \cite{Moricz1976} is the following lemma.    
\begin{Le}\texttt{(M\'{o}ricz's lemma)}. Let $(N_k)$ be an increasing sequence of integers and let $(X_n)_{n\geq 1}$ be a sequence of random variables on a probability space
$(X,\mathcal{A},\mathbb{P})$. Assume that there exist $\lambda\geq 1$ and a non-negative function $g:\mathbb{N}\times \mathbb{N} \longmapsto \mathbb{R}^+$, such that for any $j\in \mathbb{N}^*$, $m,n \in \llbracket N_j,N_{j+1}\rrbracket$ with $m<n$
\begin{enumerate}
    \item $\mathbb{E}(|\sum_{k=m+1}^{n}X_k|^2)\leq Cg^{\lambda}(m,n)$,
    \item  $g(m,j)+g(j,n)\leq g(m,n)$ for all $j \in \llbracket m,n \rrbracket $.
\end{enumerate}
    Then, if $\lambda>1$
   \[\mathbb{E}\left(\sup_{j=m+1}^{n}\left\lvert\sum_{k=m+1}^{j}X_k\right\rvert^2\right)\leq C(\lambda)g^{\lambda}(m,n) \qquad \text{for any} ~ m,n \in \llbracket N_j,N_{j+1}\rrbracket \, \text{with}~ m<n,\]
    where $C(\lambda)$ is a constant only depending on $\lambda$. \\
    If $\lambda=1$,
     \[\mathbb{E}\left(\sup_{j=m+1}^{n}\left\lvert\sum_{k=m+1}^{j}X_k\right\rvert^2\right)\leq C\log^2\left(2(n-m)\right)g(m,n) \quad \text{for any}~ m,n \in \llbracket N_j,N_{j+1}\rrbracket \, \text{with}~ m<n,\]
     where $C$ is an absolute constant.
\end{Le}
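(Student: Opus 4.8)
The plan is to establish the two maximal $L^2$ inequalities directly by adapting the Rademacher--Menshov chaining method, treating the cases $\lambda=1$ and $\lambda>1$ separately; both rest on a single structural observation. Fix a block $\llbracket N_j,N_{j+1}\rrbracket$ and $m<n$ inside it, so that every sub-block created below stays inside $\llbracket N_j,N_{j+1}\rrbracket$ and hypothesis (1) applies to it. Writing $S_j=\sum_{k=1}^j X_k$, so that $\sum_{k=m+1}^n X_k=S_n-S_m$, set $D(m,n)=\mathbb{E}\big(\max_{m<i\le n}|S_i-S_m|^2\big)$. Hypothesis (1) controls the block moments, $\mathbb{E}|S_n-S_m|^2\le C\,g^\lambda(m,n)$, and the first thing I would record is that superadditivity (2) of $g$ upgrades to superadditivity of $G:=g^\lambda$ whenever $\lambda\ge1$: since $a^\lambda+b^\lambda\le(a+b)^\lambda$ for $a,b\ge0$, one gets $G(m,h)+G(h,n)\le\big(g(m,h)+g(h,n)\big)^\lambda\le G(m,n)$. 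Thus in both regimes $\mathbb{E}|S_n-S_m|^2$ is dominated by a superadditive set function.

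For $\lambda=1$ I would run the classical dyadic chaining. Setting $p=\lceil\log_2(n-m)\rceil$, split $\llbracket m+1,n\rrbracket$ into dyadic intervals, writing $\mathcal{D}_r$ for the collection at scale $r$. The Rademacher--Menshov decomposition expresses each partial sum $S_i-S_m$ as a sum of at most $p$ block sums, one per scale, whence $\max_{m<i\le n}|S_i-S_m|\le\sum_{r=1}^p\max_{I\in\mathcal{D}_r}|S_I|$, where $S_I$ is the sum of the $X_k$ over the block $I$. Cauchy--Schwarz in $r$, then bounding the maximum over $\mathcal{D}_r$ by the full sum, gives $D(m,n)\le p\sum_{r=1}^p\sum_{I\in\mathcal{D}_r}\mathbb{E}|S_I|^2\le Cp\sum_{r=1}^p\sum_{I\in\mathcal{D}_r}G(I)$. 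Since the blocks at a fixed scale partition $\llbracket m+1,n\rrbracket$, superadditivity of $G$ collapses the inner sum to $G(m,n)=g(m,n)$, leaving $D(m,n)\le Cp^2 g(m,n)\le C\log^2(2(n-m))\,g(m,n)$, which is the stated bound.

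For $\lambda>1$ the equal-weight chaining only reproduces a $\log^2$ factor, so the logarithm must be removed by exploiting a genuine geometric gain, which is the heart of the argument. I would argue by induction on the length $n-m$, splitting at a point $h$ chosen to balance $g$ rather than the index count, i.e.\ $g(m,h)\approx g(h,n)\approx\tfrac12 g(m,n)$. Decomposing $\max_{m<i\le n}|S_i-S_m|\le\max\big(M_1,\,|S_h-S_m|+M_2\big)$, with $M_1=\max_{m<i\le h}|S_i-S_m|$ and $M_2=\max_{h<i\le n}|S_i-S_h|$, then using $\max(a,b)^2\le a^2+b^2$ and Minkowski, yields the recursion
\[
D(m,n)\le D(m,h)+\Big(\sqrt{C\,G(m,h)}+\sqrt{D(h,n)}\Big)^2 .
\]
Feeding in the inductive hypothesis $D(m,h)\le K\,G(m,h)$, $D(h,n)\le K\,G(h,n)$ together with the balancing $G(m,h)\approx G(h,n)\approx 2^{-\lambda}G(m,n)$ reduces the closing of the induction to the scalar inequality $2^{-\lambda}\big(K+(\sqrt C+\sqrt K)^2\big)\le K$; since $\lambda>1$ makes $\sqrt{2^\lambda-1}-1>0$, this holds for $K=K(\lambda)$ of order $C\big(\sqrt{2^\lambda-1}-1\big)^{-2}$, giving $D(m,n)\le K(\lambda)g^\lambda(m,n)$ with no logarithm.

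The step I expect to be the main obstacle is making the balancing split rigorous over the integers: one cannot in general cut $g$ exactly in half at an integer $h$, so I would take $h$ to be the best available approximate balance and check that the bounded imbalance perturbs the ratio $2^{1-\lambda}$ only by a factor that stays strictly below $1$, so that the induction still closes with a $\lambda$-dependent constant; care is also needed at the base case (singletons, handled directly by hypothesis (1)) and in ensuring the bisection terminates after finitely many steps. Alternatively, since the statement is quoted as a consequence of Theorems~1 and~3 of \cite{Moricz1976}, one may simply invoke those results once the hypotheses have been matched to Móricz's framework, with $g^\lambda$ playing the role of the superadditive control function.
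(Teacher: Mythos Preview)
The paper does not actually prove this lemma: it is stated in Section~\ref{sec:preliminaries} as a preliminary and is simply attributed as ``a direct consequence of Theorems~1 and~3 in \cite{Moricz1976}'', with no argument supplied. Your closing remark already captures exactly what the paper does, namely invoke M\'oricz's theorems once the hypotheses are matched (superadditive $g$, moment bound $Cg^\lambda$). Everything preceding that in your proposal---the Rademacher--Menshov chaining for $\lambda=1$ and the balanced-bisection induction for $\lambda>1$---is not a different route from the paper but rather a sketch of M\'oricz's own proof, which the paper chose not to reproduce.

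Your sketch is essentially sound and mirrors the standard arguments. The $\lambda=1$ case is the classical Rademacher--Menshov bound and your write-up is correct. For $\lambda>1$ you have correctly isolated the only genuine subtlety: one cannot in general bisect $g$ exactly at an integer, and the induction must be closed with an approximate split. The usual remedy (and M\'oricz's) is to take $h$ maximal with $g(m,h)\le\tfrac12 g(m,n)$, so that superadditivity forces $g(h+1,n)<\tfrac12 g(m,n)$ as well; one then controls the single term $X_{h+1}$ separately via hypothesis~(1), and the recursion closes with a constant depending only on $\lambda$. Your awareness of this point and your proposed handling are adequate for a sketch; filling in the three-piece split $[m,h]\cup\{h+1\}\cup[h+1,n]$ would make it rigorous.
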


The following lemmas are due to van der Corput (see Theorems 3 and 4 in \cite{CorputNeueZA}) and will be used in our examples.
\begin{Le}
Let $a$ and $b$ be two integers, with $a<b$, and let $f$ be a function on $[a,b]$. Assume that the second derivative is such that  $-f''(x)\geq \rho $ for any $x \in [a,b]$, and for some $\rho >0$. Then
\[\left\lvert\sum_{k=a}^b e^{2i\pi f(k)} \right\rvert \leq (|f'(b)-f'(a)|+2)\left(\frac{4}{\sqrt{\rho}}+3\right).\]
\end{Le}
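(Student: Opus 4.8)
The plan is to derive this second-derivative estimate from an elementary first-derivative test, which I would prove first in the following form: if $g:=f'$ is monotone on a subinterval and its distance to the nearest integer satisfies $\lVert g(x)\rVert \geq \delta$ throughout, then $\left\lvert \sum_{k} e^{2i\pi f(k)}\right\rvert \leq C_1/\delta$ over that range, for a small explicit constant $C_1$. To establish this I would rewrite the sum using the factors $\big(e^{2i\pi(f(k+1)-f(k))}-1\big)^{-1}$ and apply Abel summation: by the mean value theorem $f(k+1)-f(k)=f'(\xi_k)$ with $\lVert f'(\xi_k)\rVert\geq\delta$, and since $|e^{2i\pi\theta}-1|=2|\sin\pi\theta|\geq 4\lVert\theta\rVert$ these factors are bounded by $(4\delta)^{-1}$, while their monotonicity (inherited from that of $f'$) controls the total variation appearing in the summation by parts. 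I would also record the consequences of $-f''(x)\geq\rho$: the derivative $f'$ is strictly decreasing, its total variation on $[a,b]$ is $\Delta:=f'(a)-f'(b)=\lvert f'(b)-f'(a)\rvert$, and consecutive values satisfy $f'(k)-f'(k+1)\geq\rho$.

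The engine of the proof is a decomposition according to the value of $f'$. As $f'$ sweeps a range of length $\Delta$, I would partition $[a,b]$ into at most $\Delta+2$ consecutive blocks on each of which $f'$ stays within distance $1/2$ of a single integer $m$, cutting at the preimages of the half-integers. On a fixed block, since $e^{2i\pi f(k)}$ is unchanged when $f$ is modified by an integer-valued linear term $mx$ (this shifts $f'$ by $-m$ and leaves $f''$ untouched), I may assume $m=0$, so that $\lVert f'\rVert=\lvert f'\rvert$ there. I then split the terms of the block into a \emph{far} part, where $\lVert f'(k)\rVert\geq\delta$, and a \emph{near} part, where $\lVert f'(k)\rVert<\delta$, for a threshold $\delta\in(0,1/2]$ to be chosen.

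For the far part I would apply this first-derivative test on each of the (at most two) sub-blocks on which $f'$ keeps one sign and stays at distance $\geq\delta$ from $0$, obtaining a contribution of order $\delta^{-1}$. For the near part I simply count: the values $f'(k)$ lying in the length-$2\delta$ interval $(-\delta,\delta)$ decrease by at least $\rho$ per step, so there are at most $2\delta/\rho+1$ of them, each of modulus $1$. Hence each block contributes at most $C_1/\delta+2\delta/\rho+1$.

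It remains to optimize and bookkeep. Choosing $\delta=\min(\sqrt\rho,\,1/2)$ balances $C_1/\delta$ against $2\delta/\rho$; with $\delta=\sqrt\rho$ the per-block bound becomes $(C_1+2)\rho^{-1/2}+1$. Tracking the first-derivative test so that $C_1\leq 2$, and absorbing the boundary blocks, the capped regime $\rho>1/4$ (where the far part is empty), and the additive $1$'s into a generous constant, gives the per-block estimate $4\rho^{-1/2}+3$. Summing over the at most $\lvert f'(b)-f'(a)\rvert+2$ blocks then yields $\left\lvert\sum_{k=a}^b e^{2i\pi f(k)}\right\rvert\leq(\lvert f'(b)-f'(a)\rvert+2)(4\rho^{-1/2}+3)$. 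I expect the main obstacle to be exactly this constant-tracking: proving the first-derivative test via Abel summation with a constant small enough ($C_1\leq 2$), and handling the boundary blocks and the large-$\rho$ regime carefully enough that the stated constants $4$, $3$ and $2$ emerge rather than merely some absolute constants.
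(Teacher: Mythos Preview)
The paper does not prove this lemma at all: it is quoted in Section~\ref{sec:preliminaries} as a classical result due to van der Corput (Theorem~3 in \cite{CorputNeueZA}) and is simply used as a black box in the examples. There is therefore no ``paper's own proof'' to compare against.

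That said, your outline is exactly the standard derivation of the van der Corput second-derivative test (as in Titchmarsh or Graham--Kolesnik): a first-derivative (Kusmin--Landau) estimate via Abel summation, a partition of $[a,b]$ into $\leq |f'(b)-f'(a)|+2$ blocks on which $f'$ lies near a fixed integer, a near/far split on each block, and the optimisation $\delta\asymp\sqrt\rho$. The strategy is correct. Your own caveat is the right one: the only delicate point is the constant tracking needed to land precisely on $(4\rho^{-1/2}+3)$ rather than merely $O(\rho^{-1/2})$, in particular controlling the boundary effects in the Abel summation so that the Kusmin--Landau constant is small enough, and handling the regime $\rho>1/4$ cleanly. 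None of this is a conceptual gap, but if you actually write it out you should expect to spend most of the effort on that bookkeeping.
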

\begin{Le}
Let $n\geq 2$ be an integer and put $K=2^n$. Suppose that $a\leq b\leq a+N$ and that $f:[a,b]\to
\mathbb{R} $ has continuous $n$th derivative that satisfies the inequality $0<\lambda\leq \lvert f^{(n)}(x)\rvert\leq h\lambda$ for any $x \in [a,b]$. Then
\begin{align*}
    \left\lvert \sum_{k=a}^b e^{2i\pi f(k)} \right\rvert \leq C h N \left(\lambda^{\frac{1}{K-2}}+N^{\frac{-2}{K}}+(N^n\lambda)^{\frac{-2}{K}}\right).
\end{align*}
\end{Le}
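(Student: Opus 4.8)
The plan is to prove the estimate by induction on $n\geq 2$, using the preceding van der Corput lemma (the one with the second-derivative hypothesis $-f''\geq\rho$) as the base case and the Weyl--van der Corput differencing inequality to pass from order $n-1$ to order $n$. Throughout, observe that since $f^{(n)}$ is continuous and non-vanishing on $[a,b]$, it keeps a constant sign there; in particular we may replace $f$ by $-f$ when convenient (this conjugates the exponential sum and leaves its modulus unchanged), so we freely assume the sign we want. We may also assume $\lambda\leq 1$: when $\lambda\geq 1$ the trivial bound $\bigl|\sum_{k=a}^b e^{2i\pi f(k)}\bigr|\leq N+1$ is already dominated by $ChN\lambda^{1/(K-2)}$, so the claim holds automatically.

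For the base case $n=2$ (so $K=4$), after the sign normalisation we have $-f''\geq\lambda$, and since $|f'(b)-f'(a)|\leq\int_a^b|f''|\leq h\lambda N$ the previous lemma with $\rho=\lambda$ gives
\[
\Big|\sum_{k=a}^b e^{2i\pi f(k)}\Big|\leq (h\lambda N+2)\Big(\tfrac{4}{\sqrt\lambda}+3\Big)=4hN\sqrt\lambda+3h\lambda N+\tfrac{8}{\sqrt\lambda}+6.
\]
Using $\lambda\leq 1$ and $h\geq 1$, each term on the right is bounded by one of $ChN\lambda^{1/2}$, $ChN(N^2\lambda)^{-1/2}$, $ChN\,N^{-1/2}$, which is exactly the asserted bound for $n=2$.

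For the inductive step, assume the estimate at order $n-1$ (with $K'=2^{n-1}=K/2$) and recall the Weyl--van der Corput inequality, which follows from Cauchy--Schwarz: for every integer $1\leq Q\leq N$,
\[
\Big|\sum_{k=a}^b e^{2i\pi f(k)}\Big|^2\leq\frac{N+Q}{Q}\Big((b-a+1)+2\sum_{q=1}^{Q-1}\Big|\sum_{k} e^{2i\pi(f(k+q)-f(k))}\Big|\Big).
\]
Writing $g_q(x)=f(x+q)-f(x)$, the constant sign of $f^{(n)}$ yields
\[
q\lambda\leq\Big|\int_x^{x+q} f^{(n)}(t)\,\mathrm{d}t\Big|=|g_q^{(n-1)}(x)|\leq q\,h\lambda,
\]
so $g_q$ satisfies the hypotheses at order $n-1$ with lower bound $q\lambda$ and the \emph{same} ratio $h$. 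Applying the induction hypothesis to each inner sum and summing over $q$ via $\sum_{q\leq Q}q^{s}\asymp Q^{s+1}$ (and $\sum_{q\leq Q}q^{-s}\asymp Q^{1-s}$ for $0<s<1$), then using $\frac{N+Q}{Q}\leq\frac{2N}{Q}$, leads to a bound of the schematic form
\[
\Big|\sum_{k=a}^b e^{2i\pi f(k)}\Big|^2\leq C h N^2\Big(Q^{-1}+\lambda^{a'}Q^{a'}+N^{-b'}+(N^{n-1}\lambda)^{-b'}Q^{-b'}\Big),
\]
where $a'=\tfrac{1}{2^{n-1}-2}$ and $b'=\tfrac{2}{2^{n-1}}=2b_n$ with $b_n=\tfrac{2}{K}$.

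It then remains to choose $Q$ optimally and take the square root. The square root turns the last two terms into exactly $N\cdot N^{-b_n}$ and $N(N^{n-1}\lambda\,Q)^{-b_n}$, while balancing $Q^{-1/2}$ against $\lambda^{a'/2}Q^{a'/2}$ suggests $Q\asymp\lambda^{-a'/(1+a')}$; a short computation gives $\frac{a'}{2(1+a')}=\frac{1}{K-2}$, so this balance produces precisely the term $N\lambda^{1/(K-2)}$. When this optimal $Q$ exceeds $N$ one takes $Q=N$ instead, whence $(N^{n-1}\lambda Q)^{-b_n}=(N^n\lambda)^{-b_n}$ and the first term becomes $N^{1-b_n}$; checking that in each of the two regimes all four contributions are dominated by the three target terms $\lambda^{1/(K-2)}$, $N^{-2/K}$, $(N^n\lambda)^{-2/K}$ closes the induction. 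I expect this final optimisation, together with the exponent bookkeeping, to be the main obstacle: one must verify that the recursion $b_{n-1}=2b_n$ and the balance identity $\frac{a'}{2(1+a')}=\frac{1}{K-2}$ reproduce the stated powers exactly, and that the boundary case $Q=N$ is correctly absorbed into the $N^{-2/K}$ and $(N^n\lambda)^{-2/K}$ terms.
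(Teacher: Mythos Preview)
The paper does not actually prove this lemma: it is quoted from van der Corput's original article (Theorem~4 in \cite{CorputNeueZA}) as a preliminary tool, with no argument given. Your inductive strategy via the Weyl--van der Corput differencing inequality is precisely the classical route to this estimate, so as a reconstruction it is on the right track.

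There is, however, a concrete gap in the optimisation step. The two-regime split you propose --- balance terms~1 and~2 to get $Q^*=\lambda^{-a'/(1+a')}$, else take $Q=N$ --- does not suffice. Take $n=3$ (so $K=8$, $K'=4$, $a'=\tfrac12$, $b_n=\tfrac14$) and $\lambda=N^{-2}$. Then $Q^*=\lambda^{-1/3}=N^{2/3}<N$, so your first regime applies; but the fourth contribution after the square root is
\[
C\sqrt{h}\,N\,(N^{2}\lambda\,Q^*)^{-1/4}=C\sqrt{h}\,N^{1/2}\lambda^{-1/6}=C\sqrt{h}\,N^{5/6},
\]
whereas all three target terms $N\lambda^{1/6}$, $N\cdot N^{-1/4}$, $N(N^3\lambda)^{-1/4}$ equal $CN^{3/4}$ at this $\lambda$. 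Thus the ``checking that \ldots all four contributions are dominated'' which you correctly flag as the main obstacle would in fact fail throughout the range $N^{-3}<\lambda<N^{-3/2}$ (and analogously for larger $n$).

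The fix is straightforward once seen: terms~1 and~4 are both decreasing in $Q$ while term~2 is increasing, so one should balance term~2 against the \emph{larger} of terms~1 and~4, not just term~1. This produces a third regime in which one balances terms~2 and~4, giving (for $n=3$) $Q=(N\lambda)^{-1}$; in that regime every term is $\leq CN^{-b'}$ and is absorbed by the middle target $N^{-2/K}$. With this three-way case split the induction closes cleanly.
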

\section{Convergence of ergodic weighted averages with natural density}
 Let $f$ be a measurable function and denote by $S_N(f)$ the weighted sums associated with $f$, i.e.  
 \[S_N(f)= \sum_{k=0}^{N-1}w_kT^{u_k}(f).\]
Let $(A_N)_{N>0}$ be a sequence converging to infinity as $N$ goes to infinity. In this section, we prove Theorems 1, 2, 3 and their corollaries. Our proofs can be divided in two types. The first one concerns Theorem \ref{Th1}, Theorem \ref{Th2} and the first statement of Theorem \ref{Th3} and uses M\'oricz's lemma as a crucial role. The second one concerns the second statement of Theorem \ref{Th3} and uses another method.  

The first type of proof operates in the following way.  
Let $(N_k)$ be an increasing sequence of integers. Let $N$ be a sufficiently large integer and let $j \in \NN^*$ be such that $N_j<N\leq N_{j+1}$ and $N_{j+1}<CN_j$.  The main idea is to prove that for any $f \in L^2(\mu)$,
\begin{align}{\label{eq1}}
 \lim\limits_{j\rightarrow\infty} \left\rvert  \frac{S_{N_j}(f)}{A_{N_j}}\right\rvert=0 \quad \mu-a.e.
\end{align}
and that
\begin{align}{\label{eq2}}
 \lim\limits_{j\rightarrow\infty} \sup_{N=N_{j}+1}^{N_{j+1}}\left\rvert \frac{S_{N}(f)}{A_{N}}-\frac{S_{N_j}(f)}{A_{N_j}}\right\rvert=0 \quad \mu-a.e..  
\end{align}
 Proving Equations \eqref{eq1} and \eqref{eq2} is sufficient since for any $j \in \NN$, we have
\begin{align}{\label{osillation}}
    \left\rvert\frac{S_{N}(f)}{A_{N}}\right\rvert&\leq  \left\rvert \frac{S_{N}(f)}{A_{N}}-\frac{S_{N_j}(f)}{A_{N_j}}\right\rvert+\left\rvert\frac{S_{N_j}(f)}{A_{N_j}}\right\rvert \notag \\
    &\leq \sup_{N=N_{j}+1}^{N_{j+1}}\left\rvert\frac{S_{N}(f)}{A_{N}}-\frac{S_{N_j}(f)}{A_{N_j}}\right\rvert+\left\rvert\frac{S_{N_j}(f)}{A_{N_j}}\right\rvert.
\end{align}
To deal with \eqref{eq2}, we introduce for any $k\geq1$ the following random variable
 \[X_k=\frac{S_{k-1}(f)}{A_{k-1}}-\frac{S_{k}(f)}{A_{k}}.\]
 Notice that for any $n,m \in \llbracket N_j,N_{j+1}\rrbracket$ with $m<n$,  
\begin{align*}
    \mathbb{E}\left|\sum_{k=m+1}^{n} X_k\right|^2&= \left\rvert \left\rvert\frac{S_{m}(f)}{A_m}-\frac{S_{n}(f)}{A_n}\right\rvert \right\rvert_{2,\mu}^2
    \leq \left\rvert \left\rvert f \right\rvert \right\rvert_{2,\mu}^2 \sup_{\theta \in \mathbb{R}} L^2(m,n,\theta),
\end{align*}
where the last inequality comes from Lemma 1, with
\begin{align}{\label{eqlmn}}
L(m,n,\theta):= \left\rvert\frac{V_{m}(\theta)}{A_m}-\frac{V_{n}(\theta)}{A_n}\right\rvert.    
\end{align}
The main idea is to bound $\sup_{\theta \in \mathbb{R}} L^2(m,n,\theta)$ by $Cg^\lambda(m,n)$ for $\lambda\geq 1$ and for some function $g$ of the form  \[g(m,n)=\int^{\frac{1}{m}}_{\frac{1}{n}}\frac{1}{x\log^{L} \frac{1}{x}}\mathrm{d}x,\]
with $L>1$. Indeed, if we do it, then according to  M\'oricz's lemma we have
 \[\mathbb{E} \left(\sup_{N=N_{j}+1}^{N_{j+1}}\left\rvert \frac{S_{N}(f)}{A_{N}}-\frac{S_{N_j}(f)}{A_{N_j}} \right\rvert^2\right) \leq C g^\lambda(N_j,N_{j+1}).\]
 By taking the sum over $j$, and by applying Beppo Levi theorem, this proves \eqref{eq2}.
 
 In our computations, we also prove that for any $0<l<L-1$,
\[\sum_{j\geq 1} j^l\sup_{N=N_{j}+1}^{N_{j+1}}\left\rvert \frac{S_{N}(f)}{A_{N}}-\frac{S_{N_j}(f)}{A_{N_j}} \right\rvert^2<\infty \quad \mu-a.e.. \]
\begin{prooft}{Theorem \ref{Th1}}
 Let $N_j=2^j$. Notice that, for any $m,n \in  \llbracket 2^j,2^{j+1}\rrbracket$ with $m<n$, we have $m<n\leq 2m$. First, we deal with \eqref{eq1} for $A_N=N^{\alpha+\delta}\log^H N$. It follows from Lemma 1 and from the assumption of Theorem 1, that
\begin{align*}{\label{cv sous suite}}
   \left\rvert \left\rvert \frac{S_{N_j}(f)}{N_j^{\alpha+\delta}\log^H{N_j}}\right\rvert \right\rvert_{2,\mu}^2 \leq  \int_\mathbb{R} \left\rvert \frac{V_{N_j}(\theta)}{N_j^{\alpha+\delta}\log^H{N_j}}\right\rvert^2 \mu_f(\mathrm{d}\theta)\leq C ||f||_{2,\mu}^2\frac{1}{\log^{2H-2\beta}N_j}.
\end{align*}
 Since $H>\frac{1}{2}+\beta$, this implies
\begin{align*}
    \sum_{j\geq1} \left\rvert \left\rvert \frac{S_{N_j}(f)}{N_j^{\alpha+\delta}\log^HN_j}\right\rvert \right\rvert_{2,\mu}^2<\infty.
\end{align*}
Consequently
\[\lim\limits_{j\rightarrow\infty}\frac{S_{N_j}(f)}{N_j^{\alpha+\delta}\log^HN_j}=0 \quad \mu-a.e..\]\\
Now, we have to prove that the limit of the oscillation as defined in \eqref{eq2} is equal to $0$ $\mu$-a.e..
 For any $\theta \in \mathbb{R}$, we write
\begin{align}{\label{majoration-1}}
 L(m,n,\theta)&=\left\rvert\frac{1}{n^{\alpha+\delta} \log^Hn}\sum_{k=0}^{n-1}w_ke^{2i\pi \theta u_k}-\frac{1}{m^{\alpha+\delta} \log^Hm}\sum_{k=0}^{m-1}w_ke^{2i\pi \theta u_k} \right\rvert \notag \\
 &\leq \left(\frac{1}{m^{\alpha+\delta} \log^Hm}-\frac{1}{n^{\alpha+\delta} \log^Hn}\right) \left|\sum_{k=0}^{m-1}w_ke^{2i\pi \theta u_k}\right|+\frac{1}{n^{\alpha+\delta} \log^Hn}\left|\sum_{k=m}^{n-1}w_ke^{2i\pi \theta u_k}\right|.
\end{align}
 According to Condition (H1), we have
\begin{align}{\label{majoration0}}
 \sup_{ \theta \in \mathbb{R}} L(m,n,\theta) &\leq C   \left(\frac{1}{m^{\alpha+\delta}}-\frac{1}{n^{\alpha+\delta}}\right)\frac{m^{\alpha+\delta}\log^\beta m}{\log^Hm}+\frac{n^\delta(n-m)^{\alpha}\log^\beta n}{n^{\alpha+\delta} \log^H n} \notag \\
   & \leq C\frac{(n-m)^\alpha}{n^{\alpha} \log^{H-\beta} n}.
\end{align}
Now we consider two cases.\\
\textbf{Case $\alpha>\frac{1}{2}$}. Let $\epsilon>0$. For any $n,m \in \llbracket 2^j,2^{j+1}\rrbracket$ with $m<n$, and for any $\theta \in \mathbb{R}$, we have
\begin{align}{\label{majoration1}}
 L^2(m,n,\theta)&=\left(  L(m,n,\theta)  L(m,n,\theta)^{\frac{1-\epsilon}{1+\epsilon}} \right)^{1+\epsilon}\notag \\
  &\leq C\left(\left(\frac{(n-m)^\alpha}{n^\alpha \log^{H-\beta} n}\right)\left( \frac{(n-m)^\alpha}{n^\alpha \log^{H-\beta} n}\right)^{\frac{1-\epsilon}{1+\epsilon}}  \right)^{1+\epsilon} \notag \\
 &= C\left(\left(\frac{1}{m}-\frac{1}{n}\right)\frac{mn(n-m)^{\alpha-1}}{n^\alpha \log^{H-\beta} n}\left( \frac{(n-m)^\alpha}{n^\alpha \log^{H-\beta} n}\right)^{\frac{1-\epsilon}{1+\epsilon}}  \right)^{1+\epsilon} \notag \\
 &=C\left(\int_{\frac{1}{n}}^{\frac{1}{m}} \frac{mn(n-m)^{\alpha-1}}{n^\alpha \log^{H-\beta} n}\left( \frac{(n-m)^\alpha}{n^\alpha \log^{H-\beta} n}\right)^{\frac{1-\epsilon}{1+\epsilon}}\mathrm{d}x \right)^{1+\epsilon} \notag \\
 &\leq C\left(\int_{\frac{1}{n}}^{\frac{1}{m}} \frac{(n-m)^{\alpha\frac{2}{1+\epsilon}-1}}{n^{ \alpha\frac{2}{1+\epsilon}-2}\log^{\frac{2(H-\beta)}{1+\epsilon}}n}\mathrm{d}x \right)^{1+\epsilon}.
\end{align}
Since $\alpha> \frac{1}{2}$, we have $\alpha\frac{2}{1+\epsilon}-1>0$, by taking $\epsilon$ small enough. In particular, we have  $(n-m)^{\alpha\frac{2}{1+\epsilon}-1} \leq n^{\alpha\frac{2}{1+\epsilon}-1}$. Thus
\begin{align}{\label{majoration2}}
 \sup_{ \theta \in \mathbb{R}} L^2(m,n,\theta)\leq  C\left(\int_{\frac{1}{n}}^{\frac{1}{m}} \frac{n}{\log^{\frac{2(H-\beta)}{1+\epsilon}}n}\mathrm{d}x \right)^{1+\epsilon}.
\end{align}
For any $\frac{1}{n} \leq x \leq  \frac{1}{m}$ and $m < n \leq 2m $, we notice that $\frac{1}{n} \leq x \leq  \frac{2}{n}$. Therefore    
\begin{align}{\label{majoration3}}
 \sup_{ \theta \in \mathbb{R}} L^2(m,n,\theta)\leq   C \left( \int_{\frac{1}{n}}^{\frac{1}{m}}\frac{1}{x\log^{\frac{2(H-\beta)}{1+\epsilon}}\frac{1}{x}} \mathrm{d}x\right)^{1+\epsilon}.
\end{align}
Consequently, for any $m,n$ belonging in  $\llbracket 2^j,2^{j+1}\rrbracket$ with $m<n$, we have
\begin{align*}
    \mathbb{E}\left|\sum_{k=m+1}^{n} X_k\right|^2 \leq  C \left|\left|f\right|\right|_{2,\mu}^2 \left( \int_{\frac{1}{n}}^{\frac{1}{m}}\frac{1}{x\log^{\frac{2(H-\beta)}{1+\epsilon}}\frac{1}{x}} \mathrm{d}x\right)^{1+\epsilon}.
\end{align*}
Applying M\'{o}ricz's lemma with $\lambda=1+\epsilon$ and
\[g_\epsilon(m,n)=\int_{\frac{1}{n}}^{\frac{1}{m}}\frac{1}{x\log^{\frac{2(H-\beta)}{1+\epsilon}}\frac{1}{x}} \mathrm{d}x,\]
we obtain
\begin{align*}
 \mathbb{E}\left(\sup_{N=m+1}^{n}\left\rvert\frac{S_{m}(f)}{m^{\alpha+\delta}\log^Hm}-\frac{S_{N}(f)}{N^{\alpha+\delta}\log^HN} \right\rvert^2\right)\leq C(\epsilon)\left|\left|f\right|\right|_{2,\mu}^2 g_\epsilon^{1+\epsilon}(m,n).  
\end{align*}
Since the previous inequality is true for any $m,n \in \llbracket 2^j,2^{j+1}\rrbracket$ with $m<n$, we have
\begin{align*}
 \mathbb{E}\left(\sup_{N=N_j+1}^{N_{j+1}}\left\rvert\frac{S_{N_j}(f)}{N_j^{\alpha+\delta}\log^HN_j}-\frac{S_{N}(f)}{N^{\alpha+\delta}\log^HN} \right\rvert^2\right)\leq C(\epsilon)\left|\left|f\right|\right|_{2,\mu}^2 g_\epsilon^{1+\epsilon}(N_j,N_{j+1}).  
\end{align*}
 Notice that the function $x \mapsto \frac{1}{x\log^{\frac{2(H-\beta)}{1+\epsilon}}\frac{1}{x}}$ is locally integrable in $0$ because $H>\frac{1}{2}+\beta$. Moreover there exists a constant $C>0$ such that $g_\epsilon^{1+\epsilon}(N_j,N_{j+1})\leq C g_\epsilon(N_j,N_{j+1})$. Therefore  \[\sum_{j\geq1} g_\epsilon^{1+\epsilon}(N_j,N_{j+1})<\infty.\]
 By Beppo Levi's theorem, we have \[\lim\limits_{j\rightarrow\infty} \sup_{N=N_{j}+1}^{N_{j+1}}\left\rvert \frac{S_{N}(f)}{N^{\alpha+\delta}\log^HN}-\frac{S_{N_j}(f)}{N_j^{\alpha+\delta}\log^HN_j}\right\rvert=0 \quad \mu-a.e..\]
 
 Now, we prove the strong maximal inequality. To do it, we use \eqref{osillation}, and we take the supremum over $j$. This gives
\begin{align*}
 \sup_{N>1}\left|\frac{1}{N^{\alpha+\delta}\log^HN}\sum_{k=0}^{N-1} w_k T^{u_k}(f)\right|^2 \leq 2\sup_{j\geq1}\sup_{N=N_{j}+1}^{N_{j+1}}\left\rvert\frac{S_{N}(f)}{N^{\alpha+\delta}\log^H{N}}-\frac{S_{N_j}(f)}{N_j^{\alpha+\delta}\log^H{N_j}}\right\rvert^2 \\
 +2\sup_{j\geq1}\left\rvert\frac{S_{N_j}(f)}{N_j^{\alpha+\delta}\log^H{N_j}}\right\rvert^2.  
\end{align*}
 Integrating over $\mu$, we get  
 \begin{align*}
   \left|\left|\sup_{N>1}\left|\frac{1}{N^{\alpha+\delta}\log^HN}\sum_{k=0}^{N-1} w_k T^{u_k}(f)\right| \, \right|\right|_{2,\mu}^2 \leq C \left|\left|f\right|\right|_{2,\mu}^2\sum_{j\geq1} g_\epsilon^{1+\epsilon}(N_j,N_{j+1}) \\
   +C\left|\left|f\right|\right|_{2,\mu}^2 \sum_{j\geq1}\frac{1}{\log^{2H-2\beta}N_j}.
\end{align*}
\textbf{Case $\alpha=\frac{1}{2}$}. According to \eqref{majoration0}, for any $m,n\in \llbracket 2^j,2^{j+1}\rrbracket$ with $m<n$, we have
\[\sup_{ \theta \in \mathbb{R}} L^2(m,n,\theta)\leq C\frac{n-m}{n \log^{2(H-\beta)}n}.\]
Similarly to \eqref{majoration1}, \eqref{majoration2} and \eqref{majoration3} with $\alpha=\frac{1}{2}$ and $\epsilon=0$, we obtain
\begin{align*}
    \mathbb{E}\left|\sum_{k=m+1}^{n} X_k\right|^2 \leq  C \left|\left|f\right|\right|_{2,\mu}^2 \int_{\frac{1}{n}}^{\frac{1}{m}}\frac{1}{x\log^{2(H-\beta)}\frac{1}{x}} \mathrm{d}x.
\end{align*}
 Applying M\'oricz's lemma with $\lambda=1$ and $g(m,n)=\int_{\frac{1}{n}}^{\frac{1}{m}} \frac{1}{x\log^{2(H-\beta)}\frac{1}{x}}\mathrm{d}x$, we obtain for any $m,n\in \llbracket 2^j,2^{j+1}\rrbracket$ with $m<n$, that
\begin{align*}
 \mathbb{E}\left(\sup_{N=m+1}^{n}\left\rvert\frac{S_{m}(f)}{m^{\alpha+\delta}\log^Hm}-\frac{S_{N}(f)}{N^{\alpha+\delta}\log^HN} \right\rvert^2\right)\leq C\left|\left|f\right|\right|_{2,\mu}^2 \log^2(2(n-m)) g(m,n).  
\end{align*}
Since $\frac{n}{2}\leq m$, we have $\log^2(2(n-m))\leq \log^2n$. In the same spirit as \eqref{majoration2} and \eqref{majoration3}, we can write
\[\log^2(2(n-m)) g(m,n) \leq \int_{\frac{1}{n}}^{\frac{1}{m}}\frac{1}{x\log^{2(H-\beta)-2}\frac{1}{x}} \mathrm{d}x.\]
 Since the function  $x \mapsto  \frac{1}{x\log^{2(H-\beta)-2}\frac{1}{x}}$ is locally integrable in $0$ (because $H>\frac{3}{2}+\beta$), it follows from  Beppo Levi's theorem that \[\lim\limits_{j\rightarrow\infty} \sup_{N=N_{j}+1}^{N_{j+1}}\left\rvert \frac{S_{N}(f)}{N^{\alpha+\delta}\log^HN}-\frac{S_{N_j}(f)}{N_j^{\alpha+\delta}\log^HN_j}\right\rvert=0 \quad \mu-a.e..\]

Moreover
\begin{align*}
   \left|\left|\sup_{N>1}\left|\frac{1}{N^{\alpha+\delta}\log^HN}\sum_{k=0}^{N-1} w_k T^{u_k}(f)\right| \, \right|\right|_{2,\mu}^2 \leq C\left|\left|f\right|\right|_{2,\mu}^2 \sum_{j\geq1} \int_{\frac{1}{N_{j+1}}}^{\frac{1}{N_j}} \frac{1}{x\log^{2(H-\beta)-2}\frac{1}{x}}\mathrm{d}x \\
   +C\left|\left|f\right|\right|_{2,\mu}^2 \sum_{j\geq1}\frac{1}{\log^{2H-2\beta}N_j} .
\end{align*}
This concludes the proof of Theorem 1.
\end{prooft}

\begin{prooft}{Corollary \ref{cor1}}
By Abel's summation formula, we have
\begin{align*}
  \sum_{k=2}^{N-1}\frac{w_k}{k^{\alpha+\delta}\log^Hk} T^{u_k}(f)= \sum_{k=2}^{N-2}\left(\frac{1}{k^{\alpha+\delta}\log^Hk}-\frac{1}{(k+1)^{\alpha+\delta}\log^H(k+1)}\right)\sum_{j=2}^{k} w_j T^{u_j}(f)\\
  +\frac{1}{N^{\alpha+\delta}\log^HN}\sum_{k=2}^{N-1} w_k T^{u_k}(f).
\end{align*}
 According to Theorem \ref{Th1}, the last term of the right-hand side converges to $0$ $\mu$-a.e.. Moreover, according to Lemma 1 and the fact that
 \[\left\lvert\frac{1}{k^{\alpha+\delta}\log^Hk}-\frac{1}{(k+1)^{\alpha+\delta}\log^H(k+1)}\right\rvert\leq C \frac{1}{k^{\alpha+\delta+1}\log^{H}k},\] it follows from the assumption of Corollary 1, that
 \begin{multline*}
   \left|\left|\sum_{k>1}\left(\frac{1}{k^{\alpha+\delta}\log^Hk}-\frac{1}{(k+1)^{\alpha+\delta}\log^H(k+1)}\right)\left\lvert\sum_{j=2}^{k} w_j T^{u_j}(f)\right\rvert \,\right|\right|_{2,\mu} \\
   \begin{split}
   &\leq C\sum_{k>1} \frac{1}{k^{\alpha+\delta+1}\log^{H}k} \left|\left|\sum_{j=1}^{k} w_j T^{u_j}(f)\right|\right|_{2,\mu} \\
   &\leq C ||f||_{2,\mu} \sum_{k>1} \frac{1}{k\log^{H-\beta}k}.
   \end{split}
 \end{multline*}
 The last term is finite because $H>1+\beta$. According to Beppo Levi's theorem, this proves that $\sum_{k>1} \frac{w_k}{k^{\alpha}\log^Hk} T^{u_k}(f)$ exists $\mu$-a.e.. Moreover,
 \begin{multline*}
    \left\lvert\left\lvert \sup_{N>1} \left\lvert\sum_{k=2}^{N} \frac{w_k}{k^{\alpha}\log^H k} T^{u_k}(f) \right\rvert \,\right\rvert\right\rvert_{2,\mu}
    \leq    \left\lvert\left\lvert \sum_{k>1}\left(\frac{1}{k^{\alpha}\log^Hk}-\frac{1}{(k+1)^{\alpha}\log^H(k+1)}\right)\left|\sum_{j=2}^{k} w_j T^{u_j}(f)\right|   \,\right\rvert\right\rvert_{2,\mu} \\
    +  \left\lvert\left\lvert \sup_{N>1}\left|\frac{1}{N^{\alpha}\log^HN}\sum_{k=2}^{N-1} w_k T^{u_k}(f)\right| \, \right\rvert\right\rvert_{2,\mu}.
    \end{multline*}
    Therefore \[ \left\lvert\left\lvert \sup_{N>1} \left\lvert\sum_{k=2}^{N} \frac{w_k}{k^{\alpha}\log^H k} T^{u_k}(f) \right\rvert \,\right\rvert\right\rvert_{2,\mu} \leq C||f||_{2,\mu} .\]

\end{prooft}
\begin{prooft}{Theorem \ref{Th2}} Here $A_N=N^{\frac{\alpha+1}{2}}\log^H N$. Choosing $N_j=2^j$ and
following the same lines as in the proof of Theorem \ref{Th1}, we obtain
\[\left\rvert \left\rvert \frac{S_{N_j}(f)}{N_j^{\frac{\alpha+1}{2}}\log^H{N_j}}\right\rvert \right\rvert_{2,\mu}^2 \leq C ||f||_{2,\mu}^2\frac{1}{2^{j(1-\alpha)}j^{2(H-\beta)}}.\]
Recall that $L(m,n,\theta)$ is defined in \eqref{eqlmn}. To bound $\sup_{\theta \in \mathbb{R}}L^2(m,n,\theta)$, we first notice that
\begin{align*}
    \sup_{\theta \in \mathbb{R}} L(m,n,\theta) &\leq \left(\frac{1}{m^{\frac{\alpha+1}{2}}\log^Hm}-\frac{1}{n^{\frac{\alpha+1}{2}}\log^Hn}\right)m^\alpha \log^\beta m+\frac{|V_{m,n}(\theta)|}{n^{\frac{\alpha+1}{2}}\log^Hn} \\
    &\leq \left(\frac{1}{m^{\frac{\alpha+1}{2}}\log^Hm}-\frac{1}{n^{\frac{\alpha+1}{2}}\log^Hn}\right)m^\alpha \log^\beta m+\frac{n-m}{n^{\frac{\alpha+1}{2}}\log^Hn} \\
& \leq C \frac{n-m}{n^{\frac{\alpha+1}{2}}\log^{H-\beta}n}\\
&\leq C\int_{\frac{1}{n}}^{\frac{1}{m}} \frac{n^2}{n^{\frac{\alpha+1}{2}}\log^{H-\beta}n}\mathrm{d}x.
\end{align*}
Moreover
\begin{align*}
    \sup_{\theta \in \mathbb{R}} L(m,n,\theta) \leq C \frac{n^\alpha \log^\beta n}{n^{\frac{\alpha+1}{2}}\log^Hn}.
\end{align*}
Therefore
\begin{align*}
 \sup_{ \theta \in \mathbb{R}} L^2(m,n,\theta)&\leq C \int_{\frac{1}{n}}^{\frac{1}{m}} \frac{n^2}{n^{\frac{\alpha+1}{2}}\log^{H-\beta}n}\times \frac{n^\alpha}{n^{\frac{\alpha+1}{2}}\log^{H-\beta}n}\mathrm{d}x \\
 & =C \int_{\frac{1}{n}}^{\frac{1}{m}} \frac{n}{\log^{2(H-\beta)}n}\mathrm{d}x.
\end{align*}
Consequently, for any $m,n$ in $\llbracket 2^j,2^{j+1}\rrbracket$ with $m<n$
\begin{align*}
    \mathbb{E}\left|\sum_{k=m+1}^{n} X_k\right|^2 \leq  C \int_{\frac{1}{n}}^{\frac{1}{m}} \frac{1}{x\log^{2(H-\beta)}\frac{1}{x}}\mathrm{d}x.
\end{align*}
By M\'{o}ricz's lemma applied to $\lambda=1$ and
$g(m,n)=\displaystyle{\int_{\frac{1}{n}}^{\frac{1}{m}} \frac{1}{x\log^{2(H-\beta)}\frac{1}{x}}\mathrm{d}x}$,
we obtain
\[\mathbb{E}\left(\sup_{N=m+1}^{n}\left\rvert\frac{S_{m}(f)}{m^{\frac{\alpha+1}{2}}\log^Hm}-\frac{S_{N}(f)}{N^{\frac{\alpha+1}{2}}\log^HN} \right\rvert^2\right)\leq  C  \int_{\frac{1}{n}}^{\frac{1}{m}}\frac{1}{x\log^{2(H-\beta)-2}\frac{1}{x}} \mathrm{d}x, \]
which converges if $H>\frac{3}{2}+\beta.$ The end of the proof follows the same lines as the one of Theorem \ref{Th1}.
\end{prooft}
\begin{prooft}{Corollary \ref{cor2}} We proceed as in the proof of   Corollary \ref{cor1}, by using Abel's summation formula and Theorem \ref{Th2}.
\end{prooft}
\begin{prooft}{Theorem \ref{Th3}}
First assume that $\beta>1$. We only give a sketch of proof. Taking $N_j=2^j$, we can easily prove that
 \[\left\rvert \left\rvert \frac{S_{N_j}(f)}{N_j}\right\rvert \right\rvert_{2,\mu}^2 \leq C \frac{1}{\log^{2\beta}N_j},\]
 and that
 \[\mathbb{E}\left(\sup_{N=N_j+1}^{N_{j+1}}\left\rvert\frac{S_{N_{j}}(f)}{N_{j}}-\frac{S_{N}(f)}{N} \right\rvert^2\right)\leq C(\epsilon)\left|\left|f\right|\right|_{2,\mu}^2\left( \int_{\frac{1}{n}}^{\frac{1}{m}}\frac{1}{x\log^{\beta\frac{1-\epsilon}{1+\epsilon}}\frac{1}{x}} \mathrm{d}x\right)^{1+\epsilon}.
 \]
 The end of the proof also follows the same lines as the one of Theorem \ref{Th1}.
 
Now assume that $\frac{1}{2}<\beta\leq 1$ and let $\rho>1$. For $N$ large enough there exists an integer $k$ such that
$\rho^{k^{1-\epsilon} \log k}<N<\rho^{(k+1)^{1-\epsilon}\log(k+1)}$, where $0<\epsilon \leq \frac{1}{2}$ is chosen in such a way that $\beta=\frac{1}{2(1-\epsilon)}$. Letting $\rho(k)=\rho^{k^{1-\epsilon} \log k}$, we have
\begin{align*}
    \left\rvert \frac{1}{N}\sum_{n=0}^{N-1} w_n T^{u_n}(f) \right\rvert &\leq
    \left\rvert \frac{1}{N}\sum_{n=0}^{\lfloor \rho(k)\rfloor-1} w_n T^{u_n}(f) \right\rvert +  \left\rvert \frac{1}{N}\sum_{n=\lfloor \rho(k)\rfloor}^{\lfloor\rho(k+1)\rfloor-1}  w_n T^{u_n}(f) \right\rvert
 \notag \\
    &\leq \left\rvert \frac{1}{\rho(k)}\sum_{n=0}^{\lfloor \rho(k)\rfloor-1} w_n T^{u_n}(f) \right\rvert +  \frac{1}{\rho(k)}\sum_{n=\lfloor \rho(k)\rfloor}^{\lfloor\rho(k+1)\rfloor-1}  \left\rvert T^{u_n}(f) \right\rvert,
\end{align*}
where $\lfloor \cdot \rfloor$ denote the integer part.
We prove below that the two terms of the right-hand side converge to $0$ $\mu$-a.e. as $k$ goes to infinity. To deal with the first one, we use the spectral lemma and our assumption. For any $f \in L^2(\mu)$, this gives
\[\left\rvert\left\rvert \frac{1}{\rho(k)}\sum_{n=0}^{\lfloor \rho(k)\rfloor-1} w_n T^{u_n}(f)\right\rvert \right\rvert_{2,\mu}^2 \leq \frac{ C\left\rvert\left\rvert f\right\rvert \right\rvert_{2,\mu}^2}{\log^{2\beta} \rho} \times \frac{1}{ k^{(1-\epsilon)2\beta}\log^{2\beta}k}.\]
Since $\beta=\frac{1}{2(1-\epsilon)}$, the right-hand side is the term of a convergent series. This shows that  
\[\lim\limits_{k\rightarrow\infty}\frac{1}{\rho(k)}\sum_{n=0}^{\lfloor \rho(k)\rfloor-1} w_n T^{u_n}(f)=0 \quad \mu-a.e..\]

To deal with the second term,
consider a sequence $(a_k)$ converging to infinity (this sequence will be defined later) such that $f=f_1+f_2$, where $f_1=f \ind{|f|\leq a_k}$ and  $f_2=f \ind{|f|> a_k}$. In particular, we have
\[T^{u_n}(f)=T^{u_n}(f_1)+T^{u_n}(f_2).\]
Therefore
\begin{align*}
\frac{1}{\rho(k)}\sum_{n=\lfloor \rho(k)\rfloor}^{\lfloor\rho(1+k)\rfloor-1} \left\rvert T^{u_n}(f_1) \right\rvert&\leq \frac{1}{\rho(k)}\sum_{n=\lfloor \rho(k)\rfloor}^{\lfloor\rho(1+k)\rfloor-1}\left\rvert T^{u_n}(f) \right\rvert \ind{|T^{u_n}(f)|\leq a_k} \notag \\
 &\leq a_k \frac{\lfloor\rho(1+k)\rfloor-\lfloor{\rho(k)}\rfloor}{\rho(k)} \notag \\
 &\leq Ca_k \frac{\log k}{k^\epsilon},
\end{align*}
where the last inequality comes from the mean value theorem. Now, choose $a_k=\frac{k^\epsilon}{\log^\delta k}$ ($\delta>1$). The above computations show that
$ \frac{1}{\rho(k)}\sum_{n=\lfloor \rho(k+1)\rfloor}^{\lfloor\rho(1+k)\rfloor-1} \left\rvert T^{u_n}(f_1) \right\rvert$ converges to $0$ as $k$ goes to infinity, $\mu$-a.e.. Moreover  
\begin{align*}
 \frac{1}{{\rho(k)}}\sum_{n=\lfloor {\rho(k)}\rfloor}^{\lfloor{\rho(1+k)}\rfloor-1} \left\rvert T^{u_n}(f_2) \right\rvert &\leq \frac{1}{{\rho(k)}}\sum_{n=\lfloor {\rho(k)}\rfloor}^{\lfloor{\rho(1+k)}\rfloor-1}\left\rvert T^{u_n}(f) \right\rvert \ind{|f|> a_k}.
\end{align*}
By the Cauchy?Schwarz inequality and because the square function is convex, we obtain    
\begin{align*}
\sum_{k\geq 1}\int_{X}  \frac{1}{{\rho(k)}}\sum_{n=\lfloor {\rho(k)}\rfloor}^{\lfloor{\rho(1+k)}\rfloor-1} \left\rvert T^{u_n}(f_2) \right\rvert \mathrm{d}\mu &\leq C\sqrt{\sum_{k\geq 1}\int_{X}\frac{|f|^2}{k\log^{\delta}k}\mathrm{d}\mu} \times \sqrt{\sum_{k\geq 1}\frac{k\log^{2+\delta}k}{k^{2\epsilon}} \mu(|f|> a_k)}.
\end{align*}
Since $\sum_{k\geq 1}\frac{1}{k\log^{\delta}k}$ is a convergent series, we have
\[ \sqrt{\sum_{k\geq 1}\int_{X}C\frac{|f|^2}{k\log^{1+\epsilon}k}\mathrm{d}\mu} \leq C\left\rvert\left\rvert f\right\rvert \right\rvert_{2,\mu}.\]
To conclude the proof, it is enough to show that $\sum_{k\geq 1}\frac{k\log^{2+\delta}k}{k^{2\epsilon}} \mu(|f|> a_k)<\infty$. To do it, we write for some $p>0$,
\begin{align*}
  \mu\left(|f|>\frac{k^\epsilon}{\log^\delta {k}}\right)\leq \mu\left(|f|^p\log^\tau(|f|+1)>C_{\epsilon,\tau} \frac{k^{p\epsilon}}{\log^{p\delta} k} \log^\tau k  \right),
\end{align*}
 where the last inequality comes from the fact that
\begin{align*}
    |f|>\frac{k^\epsilon}{\log^\delta {k}} &\iff \log^\tau(|f|+1)> \left(\log k^\epsilon -\log \log^\delta k +\log\left(1+\frac{\log^\delta k}{k^\epsilon}\right) \right)^\tau.
\end{align*}
This gives that
\begin{align*}
    \sum_{k\geq 1}\frac{k\log^{2+\delta}k}{k^{2\epsilon}} \mu(|f|> a_k) \leq C \int_X \lvert f\rvert^{p}\log^\tau(1+\lvert f\rvert)\mathrm{d}\mu \sum_{k\geq 1}\frac{1}{k^{(2\epsilon+p\epsilon-1)}\log^{(\tau-2-\delta-p\delta)}k}.
\end{align*}
 The last series converges when $p\epsilon+2\epsilon-1=1$ and $\tau-2-\delta-p\delta>1$, i.e. when $p=\frac{2}{2\beta-1}$ and $\tau>4+\frac{2}{2\beta-1}$. Taking $f \in L^2(\mu)$ so that $\int_X \lvert f\rvert^{p}\log^\tau(1+\lvert f\rvert)\mathrm{d}\mu<\infty$, we deduce that $ \frac{1}{\rho(k)}\sum_{n=\lfloor \rho(k)\rfloor}^{\lfloor{\rho(1+k)}\rfloor-1} \left\rvert T^{u_n}(f_2) \right\rvert$ converges to $0$ as $k$ goes to infinity, $\mu$-a.e..
This concludes the proof of Theorem \ref{Th3}.
\end{prooft}
\begin{prooft}{Corollary \ref{cor3}} We proceed as in the proof of   Corollary \ref{cor1}, by using Abel's summation formula and Theorem \ref{Th3}.
\end{prooft}
\section{Convergence of ergodic weighted averages with respect to the harmonic density}
In this section, we will investigate the almost everywhere convergence of some averages with respect to the harmonic density. More precisely,
let $(w_k)$ be a sequence of complex numbers and let $(u_k)$ be a sequence of integers. For any integers $M,N$ such that $M\leq N$, denote by
   \[V_{M,N}^*(\theta)= \sum_{k=M}^{N}\frac{w_k}{k}e^{2i\pi \theta u_k}.\]
   If $M=1$, we will simply write $V_{M,N}^*(\theta)=V_{N}^*(\theta)$. We also denote by
\[S_N^*(f)= \sum_{k=1}^{N}\frac{w_k}{k}T^{u_k}(f),\]
where $f$ is a measurable function. The following result is related to Theorem 1, and is stated in the context of the harmonic density.
\begin{Th}{\label{Th4}} Let $(w_k)$ be a sequence of complex numbers and let $(u_k)$ be a sequence of integers.
 Suppose that there exists $ \frac{1}{2}\leq \alpha<1$ such that for any $M\leq N$
\begin{align}{\label{ass:main1}}
 \sup_{\theta \in \mathbb{R}}\left|V_{M,N}^*(\theta)\right|\leq C\left(\log N-\log M\right)^\alpha.  
\end{align}
Then for any contraction $T$ on $L^2(\mu)$, any $f \in L^2(\mu)$ and for $H>\frac{1}{2}$, we have
\[
  \lim\limits_{N\rightarrow\infty}\frac{1}{\log^\alpha N \log^H\log N}\sum_{k=1}^{N} \frac{w_k}{k} T^{u_k}(f)=0\quad \mu-a.e.,
  \]
  and
  \[
  \left\lvert\left\lvert \sup_{N>2} \left\lvert\frac{1}{\log^\alpha N \log^H\log N}\sum_{k=1}^{N} \frac{w_k}{k} T^{u_k}(f) \right\rvert \,\right\rvert\right\rvert_{2,\mu} \leq C ||f||_{2,\mu}.
  \]
\end{Th}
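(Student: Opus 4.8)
The plan is to follow the proof of Theorem~\ref{Th1} closely, after substituting the time variable $N$ by $\log N$: indeed, \eqref{ass:main1} is nothing but Condition $(H1)$ with $\delta=\beta=0$ and the given $\alpha$, read in the variable $\log N$. I set $A_N=\log^\alpha N\,\log^H\log N$ and choose the sparse subsequence $N_j=\exp(2^j)$, so that $\log N_j=2^j$ doubles, $\log\log N_j=j\log 2$, and for any $m,n\in\llbracket N_j,N_{j+1}\rrbracket$ the two quantities $\log m$ and $\log n$ are comparable, with ratio at most $2$. Exactly as in \eqref{eq1}--\eqref{osillation}, it then suffices to prove convergence of $S^*_{N_j}(f)/A_{N_j}$ along $(N_j)$ and to control, on each block $\llbracket N_j,N_{j+1}\rrbracket$, the oscillation of $S^*_N(f)/A_N$.

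For the first point, the spectral lemma and \eqref{ass:main1} give $\|S^*_{N_j}(f)/A_{N_j}\|_{2,\mu}^2\le\int|V^*_{N_j}(\theta)/A_{N_j}|^2\,\mu_f(\mathrm d\theta)\le C\|f\|_{2,\mu}^2(\log N_j)^{2\alpha}/A_{N_j}^2=C\|f\|_{2,\mu}^2/(j\log 2)^{2H}$, which is the term of a convergent series precisely because $H>\tfrac12$; hence $S^*_{N_j}(f)/A_{N_j}\to 0$ $\mu$-a.e. This is already where the constraint $H>\tfrac12$ on the exponent of $\log\log N$ is forced. For the oscillation I introduce $X_k=S^*_{k-1}(f)/A_{k-1}-S^*_k(f)/A_k$ and, as in \eqref{eqlmn}, reduce to bounding $\sup_\theta L^2(m,n,\theta)$ with $L(m,n,\theta)=|V^*_m(\theta)/A_m-V^*_n(\theta)/A_n|$. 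Splitting $L$ as in \eqref{majoration-1} and inserting $\sup_\theta|V^*_m|\le C\log^\alpha m$ and $\sup_\theta|V^*_{m+1,n}|\le C(\log n-\log m)^\alpha$, the computation of \eqref{majoration0} carried out in the variable $\log N$ (with $\beta=0$) yields $\sup_\theta L(m,n,\theta)\le C(\log n-\log m)^\alpha/(\log^\alpha n\,\log^H\log n)$.

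When $\alpha>\tfrac12$ I would then reproduce \eqref{majoration1}--\eqref{majoration3} with the substitution $x\mapsto y=1/\log N$: writing $1/\log m-1/\log n=\int_{1/\log n}^{1/\log m}\mathrm dy$ and using that $1/y$ is comparable to $\log n$ on the range of integration, the Hölder splitting $L^2=(L\,L^{(1-\epsilon)/(1+\epsilon)})^{1+\epsilon}$ gives $\sup_\theta L^2(m,n,\theta)\le C\,g_\epsilon^{1+\epsilon}(m,n)$ with $g_\epsilon(m,n)=\int_{1/\log n}^{1/\log m}\big(y\,\log^{2H/(1+\epsilon)}(1/y)\big)^{-1}\mathrm dy$; here $\epsilon$ is chosen small enough that $2\alpha/(1+\epsilon)-1>0$, which is exactly where $\alpha>\tfrac12$ is used. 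M\'oricz's lemma with $\lambda=1+\epsilon$, then summation over $j$ and Beppo Levi's theorem, give \eqref{eq2} and, through \eqref{osillation}, the maximal inequality; the relevant series is $\sum_j g_\epsilon^{1+\epsilon}(N_j,N_{j+1})$, which is of order $\sum_j j^{-2H}$ and converges iff $H>\tfrac12$.

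The hard part will be the endpoint $\alpha=\tfrac12$, where $2\alpha/(1+\epsilon)-1<0$ for every $\epsilon>0$, so the Hölder trick fails and one must apply M\'oricz's lemma with $\lambda=1$, at the cost of a factor $\log^2\big(2(n-m)\big)$. The difficulty is that the number of genuine summands $n-m$ on a block can be as large as $N_{j+1}=\exp(2^{j+1})$, so this factor is of order $(\log N)^2$, vastly larger than the $(\log\log N)^2$ scale of the integrand $g$, and a direct application is hopeless. The remedy I would pursue is to reindex the partial sums by unit-logarithmic blocks $M_i=\lfloor e^i\rfloor$, each contributing $\sup_\theta|V^*_{M_{i-1},M_i}|\le C$, and to run M\'oricz's lemma on the resulting block sums: the effective number of terms on $\llbracket N_j,N_{j+1}\rrbracket$ then drops to order $2^j$, so that $\log^2(\#\text{blocks})$ is of order $j^2$, i.e. $(\log\log N)^2$, and matches the scale of $g$. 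I expect this reindexing, together with the bookkeeping needed to absorb the $(\log\log N)^2$ factor into the exponent of $g$ and to handle the within-block variation of $S^*_N(f)$, to be the genuinely delicate part of the argument, and to be what pins down the admissible range of $H$ at this endpoint.
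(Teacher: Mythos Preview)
Your approach is essentially the paper's proof. The paper also chooses $N_j=2^{2^j}$ (your $\exp(2^j)$ up to a harmless base change), controls $\|S^*_{N_j}(f)/A_{N_j}\|_{2,\mu}^2$ by $C\|f\|_{2,\mu}^2/(\log\log N_j)^{2H}\sim C/j^{2H}$, bounds $L(m,n,\theta)\le C(\log n-\log m)^\alpha/(\log^\alpha n\,\log^H\log n)$, and then applies M\'oricz's lemma with $\lambda=1+\epsilon$ to the superadditive function $g(m,n)=\int_{1/n}^{1/m}\bigl(x\log\tfrac1x\,\log^{2H/(1+\epsilon)}\log\tfrac1x\bigr)^{-1}\mathrm dx$. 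Your integral in the variable $y=1/\log N$ is exactly the paper's integral in $x=1/N$ after the substitution $y=1/\log(1/x)$, so the two computations coincide.

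Your caution about the endpoint $\alpha=\tfrac12$ is justified and in fact goes beyond what the paper writes: the paper only displays the $\lambda=1+\epsilon$ bound and refers to ``the same lines as in the proof of Theorem~\ref{Th1}'', without separating the case $\alpha=\tfrac12$, whereas the H\"older trick genuinely needs $2\alpha/(1+\epsilon)>1$. Your diagnosis that a direct use of M\'oricz with $\lambda=1$ fails because $\log^2\bigl(2(n-m)\bigr)$ is of size $(\log N)^2$ rather than $(\log\log N)^2$ is correct, and your proposed reindexing by unit-logarithmic blocks $M_i=\lfloor e^i\rfloor$ is the right remedy: it reduces the number of M\'oricz summands on $\llbracket N_j,N_{j+1}\rrbracket$ from order $\exp(2^{j+1})$ to order $2^j$, so the extra factor becomes $j^2\sim(\log\log N)^2$ and can be absorbed into $g$. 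Carrying this through yields the constraint $H>\tfrac32$ at $\alpha=\tfrac12$, in exact parallel with Theorem~\ref{Th1} (case $\alpha=\tfrac12$, $\beta=0$); the paper's stated threshold $H>\tfrac12$ appears to be justified by its written argument only for $\alpha>\tfrac12$.
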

\begin{prooft}{Theorem \ref{Th4}}
We proceed as in the proof of Theorem \ref{Th1}. First, we choose $N_j=2^{2^j}$,
then by using  the spectral lemma and assumption (\ref{ass:main1}), we obtain for $H>\frac{1}{2}$  
\begin{align*}
    \sum_{j>1} \left\rvert \left\rvert \frac{S^*_{N_j}(f)}{\log^\alpha N_j\log^H\log N_j}\right\rvert \right\rvert_{2,\mu}^2  \leq C||f||_{2,\mu}^2 \sum_{j>1} \frac{1}{\log^{2H}\log N_j}\leq C||f||_{2,\mu}^2 \sum_{j>1}\frac{1}{j^{2H}}<\infty.
\end{align*}
This implies that
\begin{align}{\label{eq1:main1}}
 \lim\limits_{j\rightarrow\infty}\frac{S^*_{N_j}(f)}{\log^\alpha N_j \log^H\log N_j}=0 \quad \mu-a.e..  
\end{align}
Moreover, for any $m,n$ in $\llbracket N_j,N_{j+1} \rrbracket$, with $m<n$, we have
\begin{align*}
\left\lvert \frac{V^*_{n}(\theta)}{\log^\alpha n \log^H\log n}-  \frac{V^*_{m}(\theta)}{\log^\alpha m \log^H \log m}\right\rvert  
 \leq C \frac{(\log n-\log m)^\alpha}{\log^\alpha n \log^H\log n}.
\end{align*}
Now, let  \[X_k=\frac{S^*_k(f)}{\log^\alpha k \log^H \log k}-\frac{S^*_{k+1}(f)}{\log^\alpha (k+1) \log^H\log (k+1)}.\]
Following the same lines as in the proof of Theorem \ref{Th1}, we obtain for all $\epsilon>0$
\[\mathbb{E}\left\lvert\sum_{k=N_j}^{N_{j+1}-1}X_k\right\rvert^2\leq C \left( \int^{\frac{1}{N_{j}}}_{\frac{1}{ N_{j+1}}}\frac{1}{x \log\frac{1}{x}  \log^{H\frac{1-\epsilon}{1+\epsilon}+H}\log\frac{1}{x}} \mathrm{d}x\right)^{1+\epsilon}.\]
This, together with M\'oricz's lemma with $\lambda=1+\epsilon$, implies
\begin{align*}
 \mathbb{E}\left(\sup_{N=N_{j}+1}^{N_{j+1}}\left\lvert\frac{S^*_{N}(f)}{\log^\alpha N \log^H\log N}-\frac{S^*_{N_{j}}(f)}{\log^\alpha N_{j} \log^H\log N_{j}}\right\rvert^2 \right)\\
 \leq C\left( \int^{\frac{1}{N_{j}}}_{\frac{1}{ N_{j+1}}}\frac{1}{x \log\frac{1}{x}  \log^{H\frac{1-\epsilon}{1+\epsilon}+H}\log\frac{1}{x}} \mathrm{d}x\right)^{1+\epsilon}.  
\end{align*}
Using the Beppo Levi's theorem with $H>\frac{1}{2}$, we deduce that
\begin{align}{\label{eq2:main1}}
\lim\limits_{j\rightarrow\infty}\sup_{N=N_{j}+1}^{N_{j+1}}\left|\frac{S^*_{N}(f)}{\log^\alpha N \log^H\log N}-\frac{S^*_{N_{j}}(f)}{\log^\alpha N_{j+1} \log^H\log N_{j}}\right|=0 \quad \mu-a.e..  
\end{align}
 Theorem 4 follows from \eqref{eq1:main1}, \eqref{eq2:main1} and from the following inequality
\begin{align*}
    \left\rvert\frac{S^*_{N}(f)}{\log^\alpha N \log^H\log N}\right\rvert & \leq \left\lvert\frac{S^*_{N_{j}}(f)}{\log^\alpha N_{j}\log^H\log N_{j}}\right\rvert \\ &+\sup_{N=N_{j}+1}^{N_{j+1}}\left\lvert\frac{S^*_{N}(f)}{\log^\alpha N \log^H\log N}-\frac{S^*_{N_{j}}(f)}{\log^\alpha N_{j}\log^H\log N_{j}}\right\rvert.
\end{align*}
\end{prooft}
By adapting the proof of Theorem 4 and assuming that \[\sup_{\theta \in \mathbb{R}}\left|V_{M,N}^*(\theta)\right| \leq C \left(\underset{p-iterates}{\underbrace{\log \log \dots \log N}}-\underset{p-iterates}{\underbrace{\log \log \dots \log M}}\right)^\alpha,\]
we obtain the same result as Theorem 4 with normalization
$\underset{p-iterates}{\underbrace{\log^\alpha (\log \dots \log N)}} \times \underset{(p+1)-iterates}{\underbrace{\log^H( \log \dots \log N)}}$ instead of $\log^\alpha N \log^H\log N$.
\begin{Th}{\label{Th5}}  Let $(w_k)$ be a bounded sequence of complex numbers and let $(u_n)\subset{\NN}$ be a sequence of integers.
 Suppose that there exists $0\leq\alpha<1$ such that
\begin{align*}
 \sup_{\theta \in \mathbb{R}}\left|V_{N}^*(\theta)\right|\leq C\log^{\alpha} N.  
\end{align*}
 Then for any contraction $T$ on $L^2(\mu)$, any $f \in L^2(\mu)$ and for any $H>\frac{1+\alpha}{2}$, we have
\[
  \lim\limits_{N\rightarrow\infty}\frac{1}{\log^{H}N} \sum_{k=1}^{N} \frac{w_k}{k} T^{u_k}(f)=0\quad \mu-a.e.,
  \]
  and
  \[
  \left\lvert\left\lvert \sup_{N>1} \left\lvert\frac{1}{\log^{H}N}\sum_{k=1}^{N} \frac{w_k}{k} T^{u_k}(f) \right\rvert \,\right\rvert\right\rvert_{2,\mu} \leq C ||f||_{2,\mu}.
  \]
\end{Th}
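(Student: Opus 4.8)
The plan is to transcribe the proof of Theorem \ref{Th4}, working along the subsequence $N_j=2^{2^j}$, so that $\log N_j=2^j\log 2$ and, for $m,n\in\llbracket N_j,N_{j+1}\rrbracket$ with $m<n$, one has $\log n\le 2\log m$; in particular $\log m$ and $\log n$ are comparable up to a factor $2$ on each block. As in the proof of Theorem \ref{Th1}, the argument splits into convergence along the subsequence and control of the oscillation between consecutive terms. For the subsequence, I would apply the spectral lemma together with the hypothesis $\sup_\theta|V_N^*(\theta)|\le C\log^\alpha N$ to obtain $||S_{N_j}^*(f)/\log^H N_j||_{2,\mu}^2\le C||f||_{2,\mu}^2\,\log^{-2(H-\alpha)}N_j\asymp C||f||_{2,\mu}^2\,4^{-j(H-\alpha)}$, which is summable in $j$ as soon as $H>\alpha$ (a condition weaker than $H>(1+\alpha)/2$, since $\alpha<1$). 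By Beppo Levi's theorem this gives $S_{N_j}^*(f)/\log^H N_j\to 0$ $\mu$-a.e.

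For the oscillation I would set $X_k=S_k^*(f)/\log^H k-S_{k+1}^*(f)/\log^H(k+1)$, so that, by the spectral lemma, for $m,n$ in a common block $\EE|\sum_{k=m+1}^{n}X_k|^2\le ||f||_{2,\mu}^2\,\sup_\theta L^2(m,n,\theta)$ with $L(m,n,\theta)=|V_m^*(\theta)/\log^H m-V_n^*(\theta)/\log^H n|$. Here I would prove two complementary bounds: a \emph{size} bound $\sup_\theta L\le C\log^{-(H-\alpha)}n$, coming directly from the hypothesis together with $\log m\asymp\log n$; and an \emph{increment} bound $\sup_\theta L\le C(\log n-\log m)\log^{-H}n$, coming from the splitting $V_n^*=V_m^*+V_{m+1,n}^*$, the mean value estimate $\log^{-H}m-\log^{-H}n\le C(\log n-\log m)\log^{-(H+1)}n$, and the trivial bound $|V_{m+1,n}^*(\theta)|\le\sum_{k=m+1}^{n}|w_k|/k\le C(\log n-\log m)$.

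The crucial point, where the exponent $(1+\alpha)/2$ is actually decided, is that M\'oricz's lemma must be applied with $\lambda=1+\epsilon>1$ rather than $\lambda=1$. Indeed, with $\lambda=1$ the factor $\log^2(2(n-m))\asymp\log^2 n$ (on a block $n-m$ may be as large as $N_{j+1}$, and $\log N_{j+1}\asymp\log n$) would only yield $H>(3+\alpha)/2$. Since we have no $(H1)$-type increment bound on $V_{M,N}^*$, to run $\lambda=1+\epsilon$ I would \emph{interpolate} between the two bounds above: writing $L^2=L^{1+\epsilon}L^{1-\epsilon}$ and using the increment bound on the first factor and the size bound on the second gives $\sup_\theta L^2\le C(\log n-\log m)^{1+\epsilon}\log^{-s(1+\epsilon)}n$ with $s=H+(H-\alpha)\tfrac{1-\epsilon}{1+\epsilon}$. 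Because $(\log n-\log m)\log^{-s}n\le\int_{1/n}^{1/m}(x\log^s\tfrac1x)^{-1}\,\mathrm{d}x=:g_\epsilon(m,n)$, this yields $\sup_\theta L^2\le Cg_\epsilon^{1+\epsilon}(m,n)$ with $g_\epsilon$ exactly additive, as required by M\'oricz's lemma.

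Applying M\'oricz's lemma with $\lambda=1+\epsilon$ and summing over $j$ then reduces the whole estimate to the convergence of $\int_0(x\log^s\tfrac1x)^{-1}\,\mathrm{d}x$, i.e. to $s>1$; since $s\to 2H-\alpha$ as $\epsilon\to 0^+$, one can fix $\epsilon$ small enough to guarantee $s>1$ precisely when $H>(1+\alpha)/2$. The $\mu$-a.e.\ limit then follows from the subsequence convergence, the oscillation estimate, and inequality \eqref{osillation}; the maximal inequality follows, exactly as in the proof of Theorem \ref{Th1}, by taking the supremum over $j$ in \eqref{osillation} and integrating, using the summability of both $\sum_j g_\epsilon^{1+\epsilon}(N_j,N_{j+1})$ and $\sum_j\log^{-2(H-\alpha)}N_j$. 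The main obstacle is exactly this interpolation step forcing $\lambda>1$; everything else is a routine adaptation of the proof of Theorem \ref{Th4}.
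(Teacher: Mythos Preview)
Your proposal is correct and matches what the paper intends: the paper omits the proof, saying only that it ``relies on a simple adaptation of the proof of Theorem~\ref{Th1}'', and your argument is precisely that adaptation---the block choice $N_j=2^{2^j}$ from Theorem~\ref{Th4} combined with the $\lambda=1+\epsilon$ interpolation of Theorem~\ref{Th1}. In particular, your observation that M\'oricz's lemma with $\lambda=1$ (the Theorem~\ref{Th2} route) would only give $H>(3+\alpha)/2$, and that the interpolation $L^2=L^{1+\epsilon}L^{1-\epsilon}$ between the trivial increment bound and the size bound is what yields the sharp threshold $H>(1+\alpha)/2$, is exactly the point.
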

The proof of the above theorem is omitted and relies on a simple adaptation of the proof of Theorem 1.
\begin{Th}{\label{Prop2}} Let $(w_k)$ be a bounded sequence of complex numbers and let $(u_n)\subset{\NN}$ be a sequence of integers.
 Suppose that, for some $\beta>0$,
\begin{align*}
 \sup_{\theta \in \mathbb{R}}|V^*_N(\theta)|\leq C\frac{\log N}{\log^\beta\log N}.
\end{align*}
\begin{enumerate}
    \item If $\beta>1$, then for any contraction $T$ on $L^2(\mu)$ and any $f \in L^2(\mu)$, we have
\[
\lim\limits_{N\rightarrow\infty}\frac{1}{\log N}\sum_{k=1}^{N} \frac{w_k}{k} T^{u_k}(f)=0 \quad \mu-a.e. \quad \text{and} \quad \left\lvert\left\lvert \sup_{N>1} \left\lvert \frac{1}{\log N}\sum_{k=1}^{N} \frac{w_k}{k} T^{u_k}(f) \right\rvert \,\right\rvert\right\rvert_{2,\mu} \leq C ||f||_{2,\mu}.
\]
\item If $\frac{1}{2}<\beta\leq1$, then for any contraction $T$ on $L^2(\mu)$, any $f \in L^2(\mu)$ such that there exists $\tau>4+\frac{2}{2\beta-1}$ with $\displaystyle{\int_X}\lvert f\rvert^{\frac{2}{2\beta-1}}\log^\tau(1+\lvert f\rvert)\mathrm{d}\mu<\infty$, we have
\begin{align*}
\lim\limits_{N\rightarrow\infty}\frac{1}{\log N}\sum_{k=1}^{N} \frac{w_k}{k} T^{u_k}(f)=0 \quad \mu-a.e..
\end{align*}
\end{enumerate}

\end{Th}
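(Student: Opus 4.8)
The plan is to transpose the proof of Theorem \ref{Th3} to the harmonic-density setting, exactly as Theorem \ref{Th4} transposes Theorem \ref{Th1}. The guiding principle is the substitution $N \leftrightarrow \log N$: the sums $S_N^*(f)$ normalized by $\log N$ play the role that $S_N(f)/N$ played before, the relevant clock becomes $\log\log N$ instead of $\log N$, and the geometric subsequence $N_j=2^j$ is replaced by the doubly exponential one $N_j=2^{2^j}$, so that $\log N_j=2^j\log 2$ is geometric and $N_{j+1}=N_j^2$ keeps $\log m$ and $\log n$ comparable on each block $\llbracket N_j,N_{j+1}\rrbracket$.

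First I would treat the case $\beta>1$, following the proof of Theorem \ref{Th4}. The spectral lemma and the hypothesis give $\|S_{N_j}^*(f)/\log N_j\|_{2,\mu}^2\leq C\|f\|_{2,\mu}^2/\log^{2\beta}\log N_j\asymp C\|f\|_{2,\mu}^2/j^{2\beta}$, which is summable since $2\beta>1$; hence $S_{N_j}^*(f)/\log N_j\to 0$ $\mu$-a.e. For the oscillation I set $X_k=S_k^*(f)/\log k-S_{k+1}^*(f)/\log(k+1)$ and bound $L^*(m,n,\theta)=|V_m^*(\theta)/\log m-V_n^*(\theta)/\log n|$ in two ways on a block: a cancellation bound $L^*\leq C/\log^\beta\log n$, obtained from the hypothesis applied to $V_n^*$ and $V_{m-1}^*$ (comparable on the block), and a trivial bound $L^*\leq C(\log n-\log m)/\log n$, obtained from $|V_{m,n}^*(\theta)|\leq C(\log n-\log m)$ and $|V_m^*(\theta)|\leq C\log m$. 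Taking the geometric mean of these with exponents $\frac{1+\epsilon}{2}$ (trivial) and $\frac{1-\epsilon}{2}$ (cancellation) yields $\sup_\theta L^{*2}(m,n,\theta)\leq C\,g^{1+\epsilon}(m,n)$ with $g(m,n)=\int_{1/n}^{1/m}\frac{\mathrm{d}x}{x\log\frac{1}{x}\,\log^{\beta\frac{1-\epsilon}{1+\epsilon}}\log\frac{1}{x}}$. M\'oricz's lemma with $\lambda=1+\epsilon$ then controls the oscillation, and the substitutions $u=\log\frac{1}{x}$, $v=\log u$ turn $\sum_j g^{1+\epsilon}(N_j,N_{j+1})$ into $\sum_j j^{-\beta\frac{1-\epsilon}{1+\epsilon}}$, finite exactly when $\beta\frac{1-\epsilon}{1+\epsilon}>1$, i.e. for $\epsilon$ small when $\beta>1$. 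Combining this with the subsequence limit through \eqref{osillation} gives both the a.e. convergence and the maximal inequality, as in Theorem \ref{Th1}.

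For the case $\frac12<\beta\leq 1$ I would mimic the second method of the proof of Theorem \ref{Th3}. Choose $\epsilon$ with $\beta=\frac{1}{2(1-\epsilon)}$ and set the (doubly exponential) block endpoints $M_k=\exp(\rho(k))$, where $\rho(k)=\rho^{k^{1-\epsilon}\log k}$, so that $\log M_k=\rho(k)$ is precisely the quantity appearing in Theorem \ref{Th3}. For $M_k\leq N\leq M_{k+1}$, using $\frac{1}{\log N}\leq\frac{1}{\log M_k}$ I split
\[\left|\frac{1}{\log N}\sum_{n\leq N}\frac{w_n}{n}T^{u_n}(f)\right|\leq\left|\frac{1}{\log M_k}\sum_{n\leq M_k}\frac{w_n}{n}T^{u_n}(f)\right|+\frac{1}{\log M_k}\sum_{M_k<n\leq M_{k+1}}\frac{1}{n}\left|T^{u_n}(f)\right|.\]
The spectral lemma bounds the first term in $L^2$ by $C\|f\|_{2,\mu}^2/\log^{2\beta}\log M_k=C\|f\|_{2,\mu}^2/\log^{2\beta}\rho(k)$, a convergent series since $(1-\epsilon)2\beta=1$, so it tends to $0$ $\mu$-a.e. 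The crucial observation is that the harmonic weight makes the block mass coincide with the natural-density case: $\frac{1}{\log M_k}\sum_{M_k<n\leq M_{k+1}}\frac1n\asymp\frac{\log M_{k+1}-\log M_k}{\log M_k}=\frac{\rho(k+1)-\rho(k)}{\rho(k)}\leq C\frac{\log k}{k^\epsilon}$ by the mean value theorem. Hence the truncation $f=f_1+f_2$ with $f_1=f\ind{|f|\leq a_k}$ and $a_k=k^\epsilon/\log^\delta k$ ($\delta>1$) runs verbatim: the $f_1$ contribution is $\leq Ca_k\log k/k^\epsilon=C/\log^{\delta-1}k\to 0$, and the $f_2$ contribution is handled by the same Cauchy--Schwarz and moment computation, which forces the exponent $\frac{2}{2\beta-1}$ and the condition $\tau>4+\frac{2}{2\beta-1}$.

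The main obstacle is this second statement: one must calibrate the block endpoints $M_k=\exp(\rho(k))$ so that $\log\log M_k=(\log\rho)\,k^{1-\epsilon}\log k$ makes the subsequence term summable, and then verify that, with the harmonic weight $\frac{w_n}{n}$ and the $\log M_k$ normalization, the block-mass ratios $\frac{\log M_{k+1}-\log M_k}{\log M_k}$ reproduce exactly the ratios $\frac{\rho(k+1)-\rho(k)}{\rho(k)}$ of Theorem \ref{Th3}. Once this comparison is established, the delicate moment estimate for $f_2$ — the genuinely technical ingredient — transfers unchanged and yields the same integrability condition on $f$; the remaining steps are routine adaptations of the arguments already carried out for Theorems \ref{Th1}, \ref{Th3} and \ref{Th4}.
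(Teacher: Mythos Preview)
Your proposal is correct and matches the paper's approach: the paper's proof is a single sentence stating that one follows the lines of Theorem~\ref{Th3} with the doubly exponential block endpoints $\rho^{\rho^{k^{1-\epsilon}\log k}}<N\leq \rho^{\rho^{(k+1)^{1-\epsilon}\log(k+1)}}$, and you have correctly unpacked both parts of this---the M\'oricz argument along $N_j=2^{2^j}$ for $\beta>1$, and the truncation argument with $M_k=\exp(\rho(k))$ for $\tfrac12<\beta\leq 1$, where the harmonic block mass $\frac{1}{\log M_k}\sum_{M_k<n\leq M_{k+1}}\frac1n\asymp\frac{\rho(k+1)-\rho(k)}{\rho(k)}$ reproduces the natural-density ratio.
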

\begin{proof}
We follow the same lines as in the proof of Theorem \ref{Th3}, with \[\rho^{\rho^{k^{1-\epsilon}\log k}}<N\leq \rho^{\rho^{(k+1)^{1-\epsilon}\log(k+1)}}.\]
\end{proof}
\begin{Rk}{\label{Cor4}}
\begin{enumerate}
\item Under the assumptions of Theorem \ref{Th4}, for any contraction $T$ on $L^2(\mu)$, any $f \in L^2(\mu)$ and for $H>1$, we have
\[ \sum_{k>2}\frac{w_k}{k\log^\alpha k \log^H\log k}T^{u_k}(f) \quad \text{exists} \quad \mu-a.e.,\]
and
\[\left\lvert\left\lvert \sup_{N>2} \left\lvert \sum_{k=3}^N\frac{w_k}{k\log^\alpha k \log^H\log k}T^{u_k}(f) \right\rvert \,\right\rvert\right\rvert_{2,\mu} \leq C ||f||_{2,\mu}. \]
\item Under the assumptions of Theorem \ref{Th5}, for any contraction $T$ on $L^2(\mu)$, any $ f \in L^2(\mu)$ and for $H>\frac{1+\alpha}{2}$, we have \[ \sum_{k>1}\frac{w_k}{k\log^{H} k}T^{u_k}(f)\quad \text{exists} \quad \mu-a.e.,\]
and
\[ \left\lvert\left\lvert \sup_{N>1 } \left\lvert \sum_{k=2}^N\frac{w_k}{k\log^{H} k}T^{u_k}(f) \right\rvert \,\right\rvert\right\rvert_{2,\mu} \leq C ||f||_{2,\mu}. \]
\item Under the assumptions of Theorem \ref{Prop2} when $\beta>1$, for any contraction $T$ on $L^2(\mu)$ and any $f \in L^2(\mu)$, we have \[ \sum_{k>1}\frac{w_k}{k\log k}T^{u_k}(f)\quad \text{exists} \quad \mu-a.e.,\]
and \[\left\lvert\left\lvert \sup_{N>1} \left\lvert \sum_{k=2}^N\frac{w_k}{k\log k}T^{u_k}(f) \right\rvert \,\right\rvert\right\rvert_{2,\mu} \leq C ||f||_{2,\mu}. \]
\end{enumerate}
\end{Rk}

\section{Examples}
In this section, we present several examples illustrating our main results.


\begin{Ex}
Let $f \in L^2(\mu)$, $d \in \mathbb{N^*}$ and $\delta>d$. In what follows, we let  $||\delta||=\min(\{\delta\},1-\{\delta\})$, where $\{\delta\}$ denotes the fractional part of $\delta$. As a first example, under suitable conditions over $K$ and $H$ which will be given later,  we obtain that \begin{align}{\label{ex1}}
 \sum_{k\geq1}\frac{e^{2i\pi k^\delta }}{k^{1-||\delta||\frac{1}{3(K-2)}}\log^H k} T^{k^d}(f)  
\end{align}
exists $\mu-a.e.$. Before proving this, we notice that Krause, Lacey and Wierdl \cite{Krause} in their study of the convergence of oscillatory ergodic Hilbert transforms showed that, for any $f \in L^r(\mu), \, 1\leq r < \infty$, the limit  \[\lim\limits_{N\rightarrow\infty}\sum_{k=1}^{N}\frac{e^{2i\pi h(k)}}{k} T^{k}(f)\] exists  $\mu$-a.e., where $h(k)$ is a Hardy field function. Recall that (see Section $3$ in \cite{Krause}) a function of the form $k^\delta=e^{\delta\log k}$ is a Hardy field function. In that case, our result is more precise than \cite{Krause}.

Now, we prove that \eqref{ex1} exists $\mu-a.e.$. According to Corollary 2 it is sufficient to prove that  
\begin{align}{\label{ahm}}
\sup_{\theta \in \mathbb{R}} \left\lvert\sum_{k=1}^{N}e^{2i\pi (k^\delta + \theta k^d)}\right\rvert \leq CN^{1-||\delta||\frac{2}{3(K-2)}},
\end{align}
 where $K=2^n$ with $n-1<\delta<n$.

In what follows, we consider the function $g(x)=\theta x^d+x^\delta$. For any $x \in [N^{1-\epsilon},N]$, where $\epsilon$ is a positive constant which will be defined later, we have
\[g^{(n)}(x)= \delta(\delta-1)\dots (\delta-(n-1))x^{\delta-n}.\]
Consequently,
\[\frac{C(n,\delta)}{N^{n-\delta}}\leq f^{(n)}(x) \leq \frac{C(n,\delta)}{N^{(1-\epsilon)(n-\delta)}},\]
where $C(n,\delta)=\delta(\delta-1)\dots (\delta-(n-1))$. According to Lemma 4 with $\lambda=\frac{C(n,\delta)}{N^{n-\delta}}$ and $h=N^{\epsilon(n-\delta)}$, we get
\[\left\lvert\sum_{k=\lfloor N^{1-\epsilon} \rfloor}^{N}e^{2i\pi (k^\delta + \theta k^d)}\right\rvert  \leq CN\left( \frac{1}{N^{\frac{n-\delta}{K-2}-\epsilon(n-\delta)}} +\frac{1}{N^{\frac{2}{K}-\epsilon(n-\delta)}} +\frac{1}{N^{\frac{2\delta}{K}-\epsilon(n-\delta)}} \right).\]
Now, choosing $\epsilon=\frac{1}{3(K-2)}$ and using the fact that $\delta>1$, we have
\begin{align*}
    \left\lvert\sum_{k=\lfloor N^{1-\epsilon} \rfloor}^{N}e^{2i\pi (k^\delta + \theta k^d)}\right\rvert  
    &\leq CN\left( \frac{1}{N^{||\delta||\left(\frac{1}{K-2}-\epsilon\right)}} +\frac{1}{N^{\frac{2}{K}-\epsilon(n-\delta)}} \right) \\
    & \leq CN\left(\frac{1}{N^{||\delta||\frac{2}{3(K-2)}}} +\frac{1}{N^{\frac{6-n+\delta}{3(K-2)}}} \right).
\end{align*}
Since $||\delta||\leq \frac{1}{2}$ and $\delta>n-1$, we obtain
\begin{align*}
    \left\lvert\sum_{k=\lfloor N^{1-\epsilon} \rfloor}^{N}e^{2i\pi (k^\delta + \theta k^d)}\right\rvert \leq CN\left( \frac{1}{N^{||\delta||\frac{2}{3(K-2)}}} \right).
    \end{align*}
It follows that
\begin{align*}
    \left\lvert\sum_{k=1}^{N}e^{2i\pi (k^\delta + \theta k^d)}\right\rvert &\leq
    \left\lvert\sum_{k=1}^{\lfloor N^{1-\epsilon} \rfloor-1}e^{2i\pi (k^\delta + \theta k^d)}\right\rvert+ \left\lvert\sum_{k=\lfloor N^{1-\epsilon} \rfloor}^{N}e^{2i\pi (k^\delta + \theta k^d)}\right\rvert \\
    & \leq C\left(N^{1-\frac{1}{3(K-2)}}+N^{1-||\delta||\frac{2}{3(K-2)}} \right) \\
    & \leq C N^{1-||\delta||\frac{2}{3(K-2)}}.
\end{align*}
This proves \eqref{ahm}. Notice that \eqref{ahm} and Theorem \ref{Th2} also imply that
\[ \lim\limits_{N\rightarrow\infty}\frac{1}{N^{1-||\delta||\frac{1}{3(K-2)}}\log^H N}\sum_{k=1}^{N} e^{2i\pi k^\delta} T^{k^d}(f)=0 \quad \mu-a.e.,\]
for any contraction $T$ on $L^2(\mu)$, any $f \in L^2(\mu)$ and for $H>\frac{3}{2}$.
\end{Ex}
\begin{Ex}{\label{Ex1}}
In this example, we take $u_k=k$ and $w_k=e^{2i\pi k^\delta}$ for some $1>\delta>0$. First we prove the following inequality  
\begin{align}{\label{Exk}}
\sup_{\theta \in \mathbb{R}} \left\lvert\sum_{k=1}^{N}e^{2i\pi (k^\delta + \theta k)}\right\rvert \leq CN^{1-\frac{\delta}{2}}.
\end{align}
To do it, the main idea is to apply the van der Corput theorem. Let $a=\sqrt{N}$, $b=N$
and $f(x)=\theta x+x^\delta$. In particular, we have
\[f'(x)= \theta+\delta x^{\delta-1} \quad \text{and} \quad f''(x)=\delta(\delta-1) x^{\delta-2}.\]
 Moreover for any $x\in [\sqrt{N},N]$, we have
\[-f''(x)\geq CN^{\delta-2}:=\rho.\]
According to Lemma 3, this gives
\begin{align*}
 \left\lvert\sum_{k=1}^{N}e^{2i\pi (k^\delta + \theta k)}\right\rvert &\leq \left\lvert\sum_{k=1}^{\lfloor \sqrt{N} \rfloor}e^{2i\pi (k^\delta + \theta k)}\right\rvert+\left\lvert\sum_{k=\lfloor \sqrt{N} \rfloor}^{N}e^{2i\pi (k^\delta + \theta k)}\right\rvert \notag \\
 & \leq \sqrt{N}+ \left(  \lvert \delta N^{\delta-1}-\delta N^{\frac{\delta-1}{2}} \rvert +2\right)\left( \frac{C}{N^{\frac{\delta-2}{2}}}+3 \right) \notag \\
 &\leq C N^{1-\frac{\delta}{2}}.
\end{align*}
This proves Equation \eqref{Exk}. Now, according to Theorem \ref{Th2}, for any contraction $T$ on $L^2(\mu)$, any $f \in L^2(\mu)$ and for $H>\frac{1}{2}$, we have
\[ \lim\limits_{N\rightarrow\infty}\frac{1}{N^{1-\frac{\delta}{4}}\log^H N}\sum_{k=1}^{N} e^{2i\pi k^\delta} T^{k}(f)=0 \quad \mu-a.e..\]
Moreover, by Corollary \ref{cor2}, the series \[\sum_{k\geq 1}\frac{e^{2i\pi k^\delta }}{k^{1-\frac{\delta}{4}+\epsilon}} T^{k}(f)\] exists  $\mu$-a.e., for some $\epsilon>0$.
Notice that our results also improve \cite{Krause}.

As a remark, if we take $w_k=e^{2i\pi \log^\delta k}$ for some $\delta>2$. Similarly to the proof of \eqref{Exk}, a simple calculation yields
\begin{align*}
\sup_{\theta \in \mathbb{R}} \left\lvert\sum_{k=1}^{N}e^{2i\pi (\log^\delta k+ \theta k)}\right\rvert \leq C\frac{N}{\log^{\frac{\delta-1}{2}}N}.    
\end{align*}
Then we can apply Theorem \ref{Th3} and Corollary \ref{cor3}.
\end{Ex}
\begin{Ex}
To deal with
this example, we first state the following inequality  due to Hlawka  (see Theorem 1 in \cite{Hlawka} with $a(t)=\frac{1}{t}$).
\[\sup_{\theta \in \mathbb{R}}\left\lvert\sum_{k=1}^{N}\frac{e^{2i\pi (h\log k+\theta k)}}{k}\right\rvert \leq 30\left(|h|+\frac{1}{|h|}\right),\]
with $h\neq 0$.

Now, according to Theorem \ref{Th5} with $\alpha=0$, for $H>\frac{1}{2}$,
\[
  \lim\limits_{N\rightarrow\infty}\frac{1}{\log^H N}\sum_{k=1}^{N} \frac{e^{2i\pi h\log k}}{k} T^{k}(f) =0\quad \mu-a.e.\] and
  \[\left\lvert\left\lvert \sup_{N>1} \left\lvert\frac{1}{\log^H N}\sum_{k=1}^{N} \frac{e^{2i\pi h\log k}}{k} T^{k}(f) \right\rvert \,\right\rvert\right\rvert_{2,\mu} \leq C ||f||_{2,\mu}.
  \]
  Moreover, according to Remark 7 (assertion 2. with $\alpha=0$), for $H>\frac{1}{2}$, we also have
  \[
  \sum_{k>1} \frac{e^{2i\pi h\log k}}{k\log^H k} T^{k}(f) \quad \text{exists}\quad \mu-a.e.\quad \text{and} \quad
  \left\lvert\left\lvert \sup_{N>1} \left\lvert\sum_{k=2}^{N} \frac{e^{2i\pi h\log k}}{k\log^H k} T^{k}(f) \right\rvert \,\right\rvert\right\rvert_{2,\mu} \leq C ||f||_{2,\mu},
  \]
  respectively.
\end{Ex}
\begin{Ex}
Here we deal with an example which was considered in  \cite{Cohen_2006,Weber2000}.
 Let $(X_k)$ be a sequence of i.i.d random variables defined on some probability space $(\Omega,\mathcal{B}, \PP)$, and let $W_k(\omega)=e^{2i \pi X_k(\omega)}$, $\omega \in \Omega$. Without loss of generality, assume that $\mathbb{E}(W_k)=0$. Let $(u_k)$ be an increasing sequence of integers such that $u_k=O\left(e^{\frac{k}{\log^{2\beta}k}}\right)$ for some $\beta>0$.

According to Theorem 1.1 in \cite{Cohen_2006} in the one-dimensional setting, there exists some random variable $C(\omega)$ which is almost surely (a.s.) finite and independent of $M,N$ and $\theta$ such that, for almost every $\omega \in \Omega$,
\begin{align*}
  \sup_{\theta \in \mathbb{R}}\left\lvert \sum_{k=M+1}^N W_k(\omega) e^{2i\pi \theta u_k}\right\rvert &\leq C(\omega) \sqrt{\sum_{k=M+1}^N\lvert W_k \rvert^2}\times \sqrt{\log u_N} \notag   \\
  &\leq C(\omega) \sqrt{N-M}\times \frac{\sqrt{N}}{\log^\beta N}.
\end{align*}
Theorem \ref{Th3} applied to $\beta>1$ ensures that there exists a measurable set $\Omega^*\subset \Omega$ of full measure, such that for any $\omega \in \Omega^*$, any contraction $T$ on $L^2(\mu)$ and any $f \in L^2(\mu)$,  
\begin{equation}{\label{weber}}
 \lim\limits_{N\rightarrow\infty}\frac{1}{N}\sum_{k=0}^{N-1} W_k(\omega) T^{u_k}(f)=0 \quad \mu-a.e. ~ \text{and} ~~
    \left\lvert\left\lvert\sup_{N\geq1}\left\lvert\frac{1}{N}\sum_{k=0}^{N-1} W_k(\omega) T^{u_k}(f)\right\rvert \, \right\rvert\right\rvert_{2,\mu}\leq C(\omega)\left|\left|f\right|\right|_{2,\mu}.    
\end{equation}
Moreover, by Corollary \ref{cor3}, we also obtain that
\begin{align}{\label{cohen}}
    \sum_{k\geq 1} \frac{W_k(\omega)}{k} T^{u_k}(f) \quad \text{exists} \quad \mu-a.e..
\end{align}
A weak version of \eqref{weber} could be derived from a theorem due to Weber. Indeed, according to Theorem 4.2 in \cite{Weber2000} with $\phi(x)=\frac{x}{\log^{2\beta}x}$, for any $\tau>\frac{3}{2}$, there exists a measurable set $\Omega^*\subset \Omega$ of full measure, such that for any $\omega \in \Omega^*$, any contraction $T$ on $L^2(\mu)$ and any $f \in L^2(\mu)$,
\[
\lim\limits_{N\rightarrow\infty}\frac{1}{N\log^{\tau-\beta} N}\sum_{k=0}^{N-1} W_k(\omega) T^{u_k}(f)=0 \quad \mu-a.e.,\] and
    \[\left|\left|\sup_{N>1}\left|\frac{1}{N\log^{\tau-\beta} N}\sum_{k=0}^{N-1} W_k(\omega) T^{u_k}(f)\right| \, \right|\right|_{2,\mu}\leq C(\omega) \left|\left|f\right|\right|_{2,\mu}.
\]
However, \eqref{weber} is more precise.

Remark that, Cohen and Cuny could also obtain \eqref{cohen} with a more restrictive condition. Indeed, \eqref{cohen} can be deduced from Theorem 1.2 in \cite{Cohen_2006} provided that  
\[\sum_{k\geq 1} \frac{||W_k||_2^2}{k^2} (\log^2k) (\log u_k)<\infty.\]
However this condition needs that
$\sum_{k\geq 1} \frac{1}{k \log^{2\beta-2}k}<\infty$
which holds when $\beta>\frac{3}{2}$. Our result is more general since we only assume that $\beta>1$.
\end{Ex}
\begin{Ex}
Let  $W_k(\omega)=e^{2i \pi X_k(\omega)}$, where $(X_k)$ is an i.i.d sequence of random variables on $(\Omega,\mathcal{B}, \PP)$ and let $u_k=O\left(e^{\frac{\log^2 k}{\log^{2\beta} \log k}}\right)$. According to Theorem 1.1 in \cite{Cohen_2006} in the one-dimensional setting, there exists some random variable $C(\omega)$ which is $\PP-$a.s. finite and independent of $M,N$ and $\theta$ such that, for almost every $\omega \in \Omega$,
\begin{align*}
  \sup_{\theta \in \mathbb{R}}\left\lvert \sum_{k=1}^N \frac{W_k(\omega)}{k} e^{2i\pi \theta u_k}\right\rvert &\leq C(\omega) \sqrt{\sum_{k=1}^N\left\lvert  \frac{W_k}{k} \right\rvert^2}\times \sqrt{\log u_N} \notag   \\
  &\leq C(\omega)  \frac{\log N}{\log^\beta \log N}.
\end{align*}
Applying Theorem \ref{Prop2} with $\beta>1$, there exists a measurable set $\Omega^*\subset \Omega$ of full measure with the following property: for any $\omega \in \Omega^*$, there exists  $C(\omega)<\infty$ such that for any contraction $T$ on $L^2(\mu)$ and any $f \in L^2(\mu)$,
\[\lim\limits_{N\rightarrow\infty}\frac{1}{\log N}\sum_{k=1}^{N} \frac{W_k(\omega)}{k}T^{u_k}(f)=0 \quad \mu-a.e.,\]
and
    \[\left\lvert\left\lvert\sup_{N>1}\left\lvert\frac{1}{\log N}\sum_{k=1}^{N} \frac{W_k(\omega)}{k} T^{u_k}(f)\right\rvert \, \right\rvert\right\rvert_{2,\mu}\leq C(\omega)\left|\left|f\right|\right|_{2,\mu}.\]
The study of weighted means of the form \[\frac{1}{A_N}\sum_{k=1}^N Z_k(\omega)T^k(f),\]
where  $(Z_k)$ is a sequence of random variables and where $(A_k)$ is a sequence of integers, is classical. For example in \cite{bouhkari2002}, it is proved that this mean converges to $0$  $\mu$-a.e. when $A_N=\sqrt{N}\log^\beta N$ ($\beta>2$) and when $(Z_k)$ is an i.i.d sequence of symmetric and square integrable random variables. In \cite{ASSANI1998139}, the same type of result was established when $A_N=N$ and when $(Z_k)$ is a sequence of i.i.d symmetric random variables such that $\mathbb{E}(|Z_k|^p)<\infty$ for some $1<p<\infty$.
\end{Ex}
\begin{Ex}
Let $(X_k)$ be a sequence of independent Bernoulli random variables on some probability space $(\Omega, \mathcal{B}, \mathbb{P})$ such that $\PP(X_k=1)=\frac{1}{\log k}=1-\PP(X_k=0)$. In this example, we consider the random set of integers $\mathcal{N}(\omega):=\{k\geq 2: X_k(\omega)=1\}$. This set is referred to as the Cramer's random model of primes. In particular, we have $\mathcal{N}(\omega)=\{u_k(\omega),  k \geq 1\},$
where \[u_1(\omega)=\inf\{i\geq 2: X_i(\omega)=1 \} \quad \text{and}  \quad u_{k+1}(\omega)=\inf\{i>u_k(\omega): X_i(\omega)=1 \}.\]
In what follows we denote by \[\Pi(N)=\#\mathcal{N}\cap [2,N]=\sum_{k=2}^N X_k.\]
The above random variable satisfies the following law of large numbers:  $\lim\limits_{N\rightarrow\infty}\frac{ \Pi(N)\log N}{N}=1$ (see Proposition 6.2. in \cite{fandominique}). We deal with below the almost everywhere convergence of the following average
\[ \frac{1}{N^\beta} \sum_{k=1}^{N}T^{u_k}(f)= \frac{1}{\Pi(u_N)^\beta} \sum_{k=2}^{u_N}X_k T^{k}(f) \qquad  \left(\frac{1}{2}<\beta\leq1\right).\]
To do it, it is sufficient to deal with the almost everywhere convergence of
\[ \frac{1}{\Pi(N)^\beta} \sum_{k=2}^{N}X_k T^{k}(f).\]
First we write
\begin{align*}
 \frac{1}{\Pi(N)^\beta} \sum_{k=2}^{N}X_k T^{k}(f)=  \frac{1}{\Pi(N)^\beta} \sum_{k=2}^{N}\left(X_k-\frac{1}{\log k} \right)T^{k}(f) +  \frac{1}{\Pi(N)^\beta} \sum_{k=2}^{N} \frac{T^{k}(f)}{\log k}.
\end{align*}
According to Theorem 1.1 in \cite{Cohen_2006} in the one-dimensional setting, there exists some random variable $C(\omega)$ which is $\PP-$a.s. finite and independent of $M,N$ and $\theta$ such that, almost surely
\begin{align*}
 \sup_{\theta \in \mathbb{R}}\left\lvert \sum_{k=M}^N \left(X_k-\frac{1}{\log k} \right) e^{2i\pi \theta k}\right\rvert &\leq C(\omega) \sqrt{\sum_{k=M}^N\left\lvert \left(X_k-\frac{1}{\log k} \right) \right\rvert^2}\times \sqrt{\log N} \notag   \\
  &\leq C(\omega)\sqrt{N-M}\times \sqrt{\log N}.  
\end{align*}
Applying Theorem \ref{Th1}, for $H>2$, we get $\PP-$a.s.
\begin{align*}
  \lim\limits_{N\rightarrow\infty}  \frac{1}{\sqrt{N}\log^H N} \sum_{k=2}^{N}\left(X_k-\frac{1}{\log k} \right)T^{k}(f)=0 \quad \mu-a.e..
\end{align*}
Since $\sup_{N>1}\frac{\sqrt{N}\log^H N}{\Pi(N)^\beta}$ is bounded, we have $\PP-$a.s.
\begin{align*}
  \lim\limits_{N\rightarrow\infty}  \frac{1}{\Pi(N)^\beta} \sum_{k=2}^{N}\left(X_k-\frac{1}{\log k} \right)T^{k}(f)=0 \quad \mu-a.e..  
\end{align*}
Now, by Abel's summation formula,  
\begin{align}{\label{eqex7}}
  \left\lvert\frac{1}{\Pi(N)^\beta} \sum_{k=2}^{N} \frac{T^{k}(f)}{\log k}\right\rvert&\leq \frac{1}{\Pi(N)^\beta} \left(\sum_{k=2}^{N-1} \frac{\log(k+1)-\log k}{\log(k+1)\log k}\left\lvert \sum_{j=2}^kT^{j}(f)\right\rvert+\frac{1}{ \log N}\left\lvert\sum_{k=2}^{N}T^{k}(f)\right\rvert\right) \notag \\
 &\leq \frac{\log^\beta N}{N^\beta}\sum_{k=2}^{N-1} \frac{1}{k^{1-\beta}\log^2 k} \left\vert\frac{1}{k^\beta}\sum_{j=2}^k T^{j}(f)\right\rvert+\frac{1}{ \log^{1-\beta} N}\left\lvert \frac{1}{N^\beta}\sum_{k=2}^{N}T^{k}(f)\right\rvert.
\end{align}
In what follows, we only deal with the case where the function $f$ is such that
\begin{align}{\label{eq4}}
  \lim\limits_{N\rightarrow\infty}\frac{1}{N^\beta} \sum_{k=1}^{N}T^{k}(f)=0 \quad \mu-a.e..
\end{align}
Examples of function $f$ satisfying the above equation are given in \cite{cuny2009,Derriennic,Weber2000}.

 According to the above assumption, the second term of \eqref{eqex7} converges to $0$ $\mu$-a.e.. To deal with the first term of \eqref{eqex7}, recall that
\begin{align*}
    \sum_{k\geq 1}b_k=+\infty \quad \text{and} \quad  \lim\limits_{k\rightarrow\infty}a_k=0 \implies  \lim\limits_{N\rightarrow\infty} \frac{\sum_{k=1}^N b_k a_k}{\sum_{k=1}^N b_k}=0
\end{align*}
for any sequences $(a_k)$, $(b_k)$. Applying this to
 $a_k=\frac{1}{k^\beta}\sum_{j=2}^k T^{j}(f)$ and $b_k=\frac{1}{k^{1-\beta}\log^2 k}$, and using the fact that $\frac{\log^\beta N}{N^\beta}\sum_{k=2}^N\frac{1}{k^{1-\beta}\log^2 k}$ is bounded, it follows that
 \[\lim_{N\to \infty}\frac{\log^\beta N}{N^\beta}\sum_{k=2}^{N-1} \frac{1}{k^{1-\beta}\log^2 k} \left\vert\frac{1}{k^\beta}\sum_{j=2}^k T^{j}(f)\right\rvert=0 \quad \mu-a.e..\]\\
Consequently, $\PP-$a.s.
\[\lim\limits_{N\rightarrow\infty} \frac{1}{N^\beta} \sum_{k=1}^{N}T^{u_k}(f)=0 \quad \mu-a.e..\]
As an open question, is it true that \eqref{eq4} implies that  
\[ \lim\limits_{N\rightarrow\infty}\frac{1}{N^\beta} \sum_{k=1}^{N}T^{p_k}(f)=0 \quad \mu-a.e. \qquad  \left(\frac{1}{2}<\beta\leq1\right),\]
where $\mathcal{P}=\{p_k\}_{k\geq1}$ denotes the set of prime numbers?
\end{Ex}
\bigskip
\noindent\textbf{Acknowledgments} We would like to thank Nicolas Chenavier for his careful reading and for his suggestions.

\end{document}